\newcommand{\R}{{\mathbb R}}
\newcommand{\id}{{\mathbf 1}}
\newcommand{\rem}[1]{}
\newtheorem{theorem}{Theorem}
\newtheorem{lemma}[theorem]{Lemma}
\newcommand{\p}{{\mathbf p}}
\newcommand{\q}{{\mathbf q}}
\newcommand{\mtot}{{\cal M}}
\renewcommand{\v}{{\mathbf v}}
\DeclareMathOperator*{\diag}{diag}
\renewcommand{\sp}{\mathfrak{sp}}
\newcommand{\so}{\mathfrak{so}}
\newcommand{\trans}{\mathbf q_0}
\newcommand{\velshift}{\mathbf v_0}
\newcommand{\nfootnote}[1]{}
\title{The Lie-Poisson structure of the reduced $n$-body problem}
\author{
      H.~R. Dullin   \thanks{Supported in part by ARC grant DP110102001} \\
     School of Mathematics and Statistics\\
     The University of Sydney\\
     Sydney, NSW 2006, Australia\\
     {\tt Holger.Dullin@sydney.edu.au} 
}
\date{\today}                                           
\begin{document}
\maketitle
\begin{abstract}
\vspace*{1ex}
\noindent
The classical $n$-body problem in $d$-dimensional space is invariant under the Galilean symmetry group.
We reduce by this  symmetry group using the method of polynomial invariants. 
As a result we obtain a reduced system with a Lie-Poisson structure which is isomorphic 
to $\sp(2n-2)$, independently of $d$. The reduction preserves the natural form of 
the Hamiltonian as a sum of kinetic energy that depends on velocities only and 
a potential that depends on positions only. Hence we proceed to construct 
a Poisson integrator for the reduced $n$-body problem using a splitting method.
\\[1ex]
Keywords: $n$-body problem, celestial mechanics, symmetry reduction, Lie-Poisson structure, Poisson integrator \\
MSC: 70F10, 37J15, 65P10 \\
\end{abstract}


\section{Introduction}

The $n$-body Hamiltonian in $\R^d$ is 
\begin{equation} \label{eqn:Ham}
   H = \sum_{i=1}^n \frac{||\p_i||^2}{2 m_i} + \sum_{1 \le i < j \le n} V_{ij}( ||\q_i - \q_j||^2) \,,
\end{equation}
with position vectors $\q_i \in \R^d$ and conjugate momentum vectors $\p_i \in \R^d$, $i = 1, \dots, n$.
All of the following is valid for a more general potential $V$ that is a function of pairwise distances $ ||\q_i - \q_j||^2$ only.
In the gravitational case the pair-wise potential function is $V_{ij}(r^2) = -G m_i m_j / r$. 
The equations of motion are
\begin{equation} \label{eqn:ode}
   \dot \q_i = \frac{1}{m_i}  \p_i, \quad \dot \p_i = - 2 \sum_{j \not = i} (\q_i - \q_j)V'_{ij}( ||\q_i - \q_j||^2)  \,.
\end{equation}

We are going to follow Lagrange's footsteps \cite{Lagrange1772}, as was more recently done by Albouy 
and Chenciner \cite{AlbouyChenciner98,Albouy04},
by introducing scalar products of difference vectors as new coordinates.
In fact, the Poisson structure we are going to derive is mentioned in \cite{AlbouyChenciner98}, 
but they were mostly interested in relative equilibria, and hence only mentioned the Poisson structure 
in passing in the final paragraph of their first section. Here we are going to derive explicit expressions for the 
Poisson structure in a particularly suitable basis and then use the Poisson structure to construct a 
geometric integrator for the reduced 3-body problem.

A related approach has been taken in \cite{BolsinovBorisovMamaev99}, but there mainly the 
case of vanishing angular momentum for three bodies is considered.
The Poisson reduction of  rotational $O(d)$ symmetry  has been discussed for arbitrary $n$ in  \cite{LMS93}
from the point of view of singular reduction.
In many ways our analysis is similar to \cite{LMS93}. 
The main difference is that they only reduced $O(d)$ symmetry, but not the Euclidean or Gallilean symmetry,
and that in parts they only considered angular momentum zero.

The general idea of Poisson reduction using invariants is as follows.
Say we have a symmetry given as a group action $\phi^g$ of the group $G$ on a Poisson manifold $M$.
Then by definition an invariant $f : M \mapsto \R$ satisfies $f \circ \phi^g = f$ for all $g \in G$.
If for every  $g$ the mapping $\phi^g$ preserves the Poisson bracket, 
then in addition we have that for two invariants $f$ and $g$ their Poisson bracket
$\{ f, g \} = \{ f \circ \phi^g, g \circ \phi^g \} = \{ f, g \} \circ \phi^g$ is also invariant under $\phi^g$. 
Reduction may be possible for a set of invariants
whose Poisson brackets with each other are closed, so that we can try to define 
a reduced Poisson bracket which has the invariants as new coordinates.
In general this reduced bracket may not satisfy the Jacobi identity in the whole reduced space,
but only on some subset defined by the syzygies between the invariants, see e.g.~\cite{Egilsson95},
and thus it may not give a Poisson bracket on the space of invariants.
However, if the bracket is linear in the invariants, the so called Lie-Poisson case, 
then the Jacobi identity is inherited from the Lie-group structure of the 
brackets of the invariants.

As pointed out in \cite{LMS93} a setting where Poisson reduction using invariants automatically reduces to the Lie-Poisson case 
is one where the invariants are quadratic forms in the original Euclidean variables for which the original Poisson bracket 
has a constant structure matrix (e.g.~the standard symplectic matrix). Since the bracket of 
two quadratic forms then again is a quadratic form the closeness then implies that the reduced bracket
is Lie-Poisson. As we will see this is the case that occurs for the Galilean symmetry of the $n$-body problem. 

Poisson reduction using invariants differs significantly from symplectic reduction, as initiated 
by Marsden and Weinstein~\cite{MW74}. Specifically 
a good review of symplectic reduction for the $n$-body problem  is given in \cite{LittlejohnReinsch97}. Translation symmetry is often reduced 
by introduction of so-called Jacobi coordinates relative to the centre of mass. 
Reduction by the rotational symmetry is more difficult because the rotation does not act freely.
A variety of coordinate systems have been employed for this,
again see \cite{LittlejohnReinsch97} for a discussion.
The classical approach is the so called elimination of the nodes, see e.g.~\cite{Whittaker37}.
This gets down to 4 degrees of freedom, but creates a singularity 
for phase space points in which all position and momentum differences are collinear so that the rotational symmetry does not act freely.
Poisson reduction using invariants instead is well suited to handle the resulting singular reduction.

To my knowledge, no matter what coordinates are used, the fully symplectically reduced 
Hamiltonians have a kinetic energy that depends on the positions. 
Using invariants for the reduction keeps the positions and momenta separate in 
the Hamiltonian and  treats all bodies equally.
In particular this will enable us to construct a numerical integration schemes that preserve the Poisson structure
using a splitting method \cite{McLaQu02,HaLuWa02,LeimkuhlerReich04}.
\nfootnote{Other symplectic integration methods for the $n$-body problem have been used \cite{Wisdom,Laskar?}.
Their emphasis is ...}


\section{Galilean symmetry of the $n$-body problem}

\nfootnote{add a comment explaining why it is not good enough to reduce by $SE(d)$, or is it? Like this?:
The momenta themselves are invariant under $SE(d)$, so it is possible to consider 
scalar product of all momentum vectors and all position difference vectors?
Then we can write $H$ in terms of those, but are they close under PB?}
The Hamiltonian \eqref{eqn:Ham} is invariant under 
translations in space $T(\q_i, \p_i) = (\q_i + \trans, \p_i)$, $i=1,\dots, n$ with $\trans \in \R^d$
and rotations $R(\q_i, \p_i) = ( R \q_i , R \p_i)$, $i=1,\dots, n$ with $R \in O(d)$.
In addition Galilean boosts $B (\q_i, \p_i) = (\q_i + \velshift  t, \p_i + m_i \velshift )$, $\velshift  \in \R^d$ 
leave the equations of motion \eqref{eqn:ode} invariant but {\em not} the Hamiltonian \eqref{eqn:Ham}.
The total symmetry group is the Galilean group $G(d)$ where in our case we include the reflections $O(d)/SO(d)$.
If one replaces the $O(d)$ subgroup by $SO(d)$ the set of invariants becomes more complicated for $d>2$, 
as pointed out by \cite{LMS93,LittlejohnReinsch97}, see below.
The Galilean group also contains a generator of time translations, and invariance of the Hamiltonian under 
this transformation leads to conservation of energy. This group element does not play a role in the following,
since we do not want to reduce by this symmetry.
\nfootnote{well, so is it reduction by $G(d)$ or not?}

Translation invariance of the Hamiltonian by Noether's theorem leads to the conservation of linear momentum ${\bf P} = \sum \p_i$.
Rotational symmetry of the Hamiltonian leads to the conservation of angular momentum with respect to the origin
$
    {\bf L} = \sum \q_i \wedge \p_i  \,.
$
In general the two-form ${\bf L}$ may be viewed as an anti-symmetric matrix,
but for the special case $d=3$ it can also be identified with a vector in $\R^3$.

The behaviour of the integrals  $\bf P$, $\bf C$, $\bf L$, $H$ under 
translation by $\trans$, boost by $\velshift$, rotations by $R$ and time translations by $\tau$ is given in the following table.
We always assume that $\mtot \not = 0$.
\[
\begin{array}{c|ccccc}
    &  {\bf P} & {\bf C} & {\bf L} & H \\ \hline
T & {\bf P} & {\bf C} + \trans & {\bf L} + \trans \wedge {\bf P} & H \\
B & {\bf P}+  \velshift  {\cal M} & {\bf C}+ \velshift  t & {\bf L}+  (\mtot  {\bf C} - t {\bf P}) \wedge \velshift  & H + {\bf P} \cdot \velshift  + \frac12 {\cal M} || \velshift  ||^2 \\
R & R {\bf P} & R{\bf C} & R {\bf L} & H \\
H & {\bf P} & {\bf C} + \tau {\bf P}/\mtot & {\bf L} & H
\end{array}
\]
The last row shows that ${\bf P}$ and ${\bf L}$ are constants of motion, as well as $\mtot {\bf C} - t {\bf P}$ (the generator of boosts), 
which implies that the centre of mass moves along a straight line with constant velocity  $\dot {\bf C} = {\bf P}/\mtot$.
The vector integrals are not invariant under rotations but instead equivariant under rotations, as shown in the third row.
None of the integrals is invariant under boosts, the additive changes are indicated in the second row. 
Note, however, that the combination $H_c = H - \frac{1}{2\mtot} ||{\bf P}||^2$ is invariant under 
boosts. This Hamiltonian is the energy up to the kinetic energy in the overall translation, and $H_c$ is the
Hamiltonian that will be reduced later on.
Similarly the combination ${\bf L}_c = {\bf L} - {\bf C} \wedge {\bf P}$ is invariant under translations and boosts. 
This is the angular momentum with respect to the centre of mass, see below.

Reduction by translation symmetry removes $d$ degrees of freedom; 
the translation reduced system has $nd - d$ degrees of freedom. 
Reduction by rotational symmetry in addition removes $d-1$ degrees of freedom,
since there are only $d-1$ commuting integrals when ${\bf L} \not = 0$ 
that can be constructed from the $d(d-1)/2$ entries of the 2-form ${\bf L}$.
When ${\bf L} = 0$ further reduction is possible.
For general ${\bf L}$ the fully reduced phase space has dimension $nd - 2d + 1$.

When the action of the symmetry is not free singularities may arise in the reduced phase space. 
When the position and momentum vectors in the 3-body problem are collinear then all rotations
that have this line as rotation axis leave the point in phase space fixed, so that there is 
non-trivial isotropy, and hence singular reduction. 
Singularities arise similarly at points in phase space for which position and momentum vectors
span a subspace whose dimension is smaller than $d$ and there are non-trivial rotations in $\R^d$ that fix this subspace. 
Thus for $d=3$ planar motions are not singular, but for $d > 3$ planar motions lead to singular reduction. 
Similarly for $d=2$ collinear motions are not singular.

\subsection*{Centre of mass decomposition}

Reduction by translation symmetry can be achieved by introducing 
a coordinate system on configuration space in which the centre of mass ${\bf C}$ is a new coordinate 
together with $n-1$ relative position vectors. 
A popular choice for such coordinates are the Jacobi coordinates,
since they keep the kinetic energy diagonal,
see e.g. \cite{LittlejohnReinsch97} and the references therein.
One disadvantage of Jacobi coordinates is that they do not 
treat the bodies equally, which is particularly annoying when 
dealing with the case of equal masses. Our approach 
preserves this discrete symmetry. 

As we have seen the Hamiltonian is not invariant under boosts. 
Clearly the potential energy is invariant under the full symmetry group, 
hence the kinetic energy is not invariant under boosts. To remedy this 
we split the kinetic energy into the kinetic energy of the motion of the 
centre of mass and the remainder denoted by $K_c$.
This relative kinetic energy $K_c$ is invariant under the full symmetry group.
$K_c$ can be found by replacing $\q_i$ by $\q_i - {\bf C}$ and $\p_i$ by $\p_i - \frac{m_i}{\mtot}{\bf P}$
 (or equivalently by replacing $\dot \q_i$ by $\dot \q_i - \dot {\bf C}$) in the original kinetic energy, 
i.e.\ by measuring positions and velocities relative to the centre of mass.
Thus we find the {\em kinetic energy relative to the centre of mass} as
\begin{equation} \label{eqn:Kc}
    K_c = \frac12 \sum_{i=1}^n \frac{1}{m_i} || \p_i - \frac{m_i}{\mtot} {\bf P}||^2  =  \frac12 \sum_{i=1}^n  \frac{1}{m_i}  || \p_i||^2  - \frac{1}{2 \mtot} || {\bf P}||^2
\end{equation}
so that $H_c = K_c + V = H -  \frac{1}{2 \mtot} || {\bf P}||^2$ is invariant under translations and boosts.
The original Hamiltonian hence is $H = H_c + \frac{1}{2 \mtot} || {\bf P}||^2$.
The invariance of $K_c$ is obvious because both
 $ \p_i$ and $\frac{m_i}{\mtot} {\bf P}$ are changed by $m_i \velshift $ 
under boosts $B$.
Invariance of $K_c$ can also be verified using the table showing the effect of the $G(d)$ group action on the constants of motion.
In fact $H_c$ is the Casimir of $G(d)$ as already mentioned.

Measuring the total momentum relative to the centre of mass only gives zero: 
${\bf P}_c = \sum (  \p_i - \frac{m_i}{\mtot} {\bf P}) = 0$
and similarly for the centre of mass  ${\bf C}_c = \sum m_i ( \q_i - {\bf C}) = 0$.
Finally we also define the {\em angular momentum about the centre of mass} as
\begin{equation} \label{eqn:Lc}
    {\bf L}_c = \sum ( \q_i - {\bf C}) \wedge ( \p_i - \frac{m_i}{\mtot} {\bf P}) 
    = \sum \q_i \wedge \p_i -  {\bf C} \wedge {\bf P}
    \,.
\end{equation}
Invariance of ${\bf L}_c$ under boosts $B$ is now obvious, as it was for $K_c$.
The length squared of the angular momentum relative to the centre of mass $||{\bf L}_c||^2$ 
is hence invariant under the full $G(d)$ action. 
What we have done here is the well known derivation of the Casimirs $K_c$ and $||{\bf L}_c ||^2$ of the 
Galilean group, see e.g.~\cite{SudarshanMukunda74}. 
This prepares the system for reduction using quadratic polynomial invariants.

\section{Reduction using polynomial invariants}

The Hilbert-Weyl theorem (see, e.g., \cite{GSS88})
guarantees the existence of a so called Hilbert basis, i.e.\ a finite set of polynomials
that generate the ring of invariant polynomials for a compact group acting linearly on a vector space. 
By a theorem of G.~Schwarz \cite{Schwarz75} even every smooth invariant function can be expressed as 
a smooth function of the basic polynomial invariants. E.g.\ the invariant functions $K_c$, $V$,
and $|| {\bf L}_c ||$ can all be written in terms of the basic polynomial invariants.
The Galilean group $G(d)$ is not compact, and the $G(d)$ action is affine instead of linear, 
so strictly speaking the theorem does not apply in our case.
Nevertheless, we will see that the invariants of the $G(d)$ action 
are quadratic functions in the original variables and that their Poisson bracket is closed,
i.e.\ every bracket of invariants can again be expressed in terms of invariants.

The invariants are introduced in two simple steps. 
First form difference vectors 
\[
   \q_{ij} = \q_i - \q_j \quad \text{and}\quad \v_{ij} = \dot \q_i - \dot \q_j = \p_i/m_i  - \p_j/m_j
\]
which are invariant under translations and boosts. 
Notice that momentum differences are not invariant under boosts unless all masses are equal. 
Second take scalar product of these difference vectors that are invariant under rotations as well.
Hence scalar products of these difference vectors are invariant under $G(d)$.

Since difference vectors are linear in the original coordinates, 
these invariants are quadratic in the original coordinates. 
If we would consider $SO(d)$ instead
of $O(d)$ reduction for $d > 2$ there would be additional invariants given by
the signs of the determinants of any $d$ difference vectors \cite{LMS93}. 
The square of such a determinant, however, can again be expressed in terms of scalar products:
it is the Gram determinant of the $d$ vectors.
\footnote{Even though there are additional invariants so that the quadratic invariants do not form 
a Hilbert basis for the invariants, the quadratic invariants alone still have a closed Poisson bracket.
So for $SO(d)$ reduction the sub-algebra of the quadratic invariants is sufficient for reduction, 
even though it is not a Hilbert basis.}

There are $n(n-1)/2$ non-zero difference vectors between $\q_i$, similarly for $\v_j$, 
but only $n-1$ of each group are independent. 
A possible choice of basis difference vectors are the $2n-2$ vectors
$\q_{1j}, \v_{1j}$, $j=2, \dots, n$, similarly for any other fixed first (or second) index.
The elements of this vector space are invariant under translations and boosts.

Now we have reduced the non-compact part of $G(d)$ by introducing difference vectors.
In the next step we introduce invariants based on these differences vectors that are $SO(d)$ invariant:
Any scalar product between two vectors from the space of difference vectors is invariant under $SO(d)$, and 
hence invariant under the full symmetry group.
The basis of the space of difference vectors has dimension $2n - 2$, 
and forming all pairs there are $(2n-2)(2n-1)/2$ fully invariant scalar products.
We will later formally show that this number is the dimension of the vector space of quadratic invariants, 
which is isomorphic to the Lie algebra $\sp(2n-2)$, see Theorem~\ref{thm:spfin}.

All the scalar products between the basic difference vectors can be conveniently 
combined in a Gram matrix.
Instead of taking the entries of the  Gram matrix as invariants we will choose certain 
linear combinations that are more natural because they appear in the reduced Hamiltonian.
In particular the potential depends on the $n(n-1)/2$ mutual distances $\rho_{ij} = ||\q_{ij}||^2$, $i < j \le n$,
\[
    V = \sum_{1 \le i < j \le n} V_{ij}( \rho_{ij} )  = - G \sum_{1 \le i < j \le n} m_i m_j \frac{1}{\sqrt{\rho_{ij} }} \,,
\]
where the second equality holds for the gravitational $n$-body problem.
The full kinetic energy of the Hamiltonian \eqref{eqn:Ham} can {\em not} be written in terms 
of the invariants because it is not invariant under boosts. 
However, the kinetic energy {\em relative to the centre of mass} $K_c$ is invariant and can be rewritten in terms of the $n(n-1)/2$
relative speeds  $\nu_{ij} = ||\v_{ij}||^2$, $i < j \le n$ as
\[
   K_c =\frac{1}{2\mtot} \left( \mtot \sum m_i ||\dot \q_i||^2 - \left( \sum m_i \dot \q_i \right)^2 \right) =   \frac{1}{2\mtot} \sum_{1 \le i < j \le n} m_i m_j \nu_{ij} \,.
\]
The terms of the form $m_i^2 \dot \q_i^2$ cancel since they appear in both sums, and the remaining $(n-1)n$ terms from the first sum
recombine with the $n(n-1)/2$ of the second sum to give the result.

Similarly the angular momentum relative to the centre of mass can be written in terms of difference vectors
using the identity $\mtot(\q_i - {\bf C}) = \sum_j m_j \q_{ij}$ and its time derivative, so that
\[
  {\bf L}_c = \frac{1}{\mtot^2} \sum_{i,j,k} m_i m_j m_k \q_{ij} \wedge \v_{ik} \,.
\]
To construct an invariant from the equivariant ${\bf L}_c$ we compute the length squared of this vector.
The identity $(a \wedge b) \cdot (c \wedge d) = (a \cdot c)( b \cdot d) -( a \cdot d)( b \cdot c)$
then shows that $||{\bf L}_c||^2$ can be written in terms of invariants.
\footnote{For general dimension $d>3$ where $L$ is an antisymmetric rank 2 matrix ($a \wedge b = a b^t - b a^t$) 
the scalar product  between two such matrices $A$ and $B$ is given by ${\mathrm tr}( A B^t)/2$, 
and the Lagrange identity still holds.}
In this expansion of total angular momentum in terms of invariants not only 
$\rho_{ij}$ and $\nu_{ij}$ appear, 
but also scalar products between $\q_{ij}$ and $\v_{kl}$.
Exactly which scalar products to choose as a basis of the vector space of quadratic invariants we leave open 
until section~\ref{sec:BasisPoi} where the structure matrix of the reduced Poisson bracket is computed in a certain basis. 
Until then we work in a basis-independent formulation.

The moment of inertia with respect to the centre of mass can be 
similarly expressed as 
\[
 I_c = \sum m_i (\q_i - \mathbf{C})^2 = \frac{1}{\mtot} \sum m_i m_j \rho_{ij} \,.
\]

We will show (see Theorem~\ref{thm:spfin}) that the dimension of the space of quadratic invariants is $(2n-1)(n-1)$,
independent of $d$. This number is given in the column denoted by $\mathrm{Inv}$ in the following tables.
In the subsequent columns of the first table the dimension $2(n-1)d$ of the translation
reduced $n$-body problem in dimension $d$ is given for $d = 2,3,4$.
\[
\begin{array}{l|l|lllll}
\multicolumn{6}{c}{\text{translation reduced}} \\
&&\multicolumn{4}{c}{2(n-1)d} \\ \hline
n \setminus d 	& \mathrm{Inv}  & 1 & 2 & 3 & 4 \\ \hline
2      			& 3  & 2 & 4 & 6 & 8  \\
3      			& 10 & 4 & 8 & 12 & 16 \\
4      			& 21 & 6 & 12 & 18 & 24 \\
5      			& 36 & 8 & 16 & 24 & 32 \\
\end{array}
\qquad \qquad
\begin{array}{l|l|lllll}
\multicolumn{6}{c}{\text{fully reduced}} \\
&&\multicolumn{4}{c}{2(n-1)d - 2(d-1)} \\ \hline
n \setminus d 	& \mathrm{Inv}  & 1 & 2 & 3 & 4 \\ \hline
2      			& 3  & 2 & 2 & 2 & 2  \\
3      			& 10 & 4 & 6 & 8 & 10 \\
4      			& 21 & 6 & 10 & 14 & 18 \\
5      			& 36 & 8 & 14 & 20 & 26 \\
\end{array}
\]
In the second table the dimensions of the fully reduced $n$-body problem are given.
It should be noted, however, that symplectic reduction in this case leads to 
1) a singular reduced space and 
2) equations which cannot be integrated using splitting methods. 
The full reduction by $O(d)$ removes another $2(d-1)$
dimensions (since there are only $d-1$ commuting integrals, even though the number
of integrals grows quadratically with $d$).
\footnote{The classical statement is that 
in the $3$-body problem in dimension $d=3$ there are 10 independent first integrals. 
From the point of view of reduction this counting is unusual for two reasons:
1) the Hamiltonian is counted, 
and 2)  the three angular momenta are counted even though they do not commute, and hence cannot be used to lower the number of degrees of freedom by $d$. 
}
Whichever counting is used,  the tables show that the method
presented here is efficient only when $n=3$ and $d\ge 3$, unless unusually high dimensions $d > 3$ are
considered.
The number of $G(d)$ invariants grows quadratically with $n$, and hence there must be more and more Casimirs
relating these invariants.
The principal strengths of the current approach are that 1) Poisson reduction using invariants
has no problem with singular reduction, 2) it is naturally independent of the dimension $d$,
and 3) it preserves the simple form of the Hamiltonian $H_c$ which allows a splitting integrator to be constructed.

Let us finally comment on the dimension of the so called shape space and its relation 
to the dimension of the fully reduced phase space.
For $d=3$ the dimension of shape space is $3n-6$, namely the dimension of the configurations space of $n$ 
points in $\R^3$ up to translations and rotations. 
This gives dimension 3 for $n=3$, namely the number of sides of a triangle. 
But the reduced system has 4 degrees of freedom, dimension 8, as listed in the the second table.
The additional degree of freedom describes the (fixed) angular momentum vector
in a body frame attached to the plane of the triangle, i.e.\ it describes the position of a point on a sphere,
which is the co-adjoint orbit of $\so^*(3)$.
The rotation of this plane about the axis of the angular momentum vector is 
obtained by reconstruction.

\subsection*{Block form of the total Poisson structure}

The canonical variables $\q_i, \p_i$ satisfy the Poisson
bracket $\{ (\q_i)_k, (\p_j)_l \} = \delta_{ij} \delta_{kl}$, $i,j = 1, \dots, n$, and $k,l = 1, \dots, d$.
A second index outside parenthesis denotes the component of a vector,
not to be confused with the double index without parenthesis $\q_{ij}$ for the difference between $\q_i$ and $\q_j$.
\nfootnote{or use greek and latin lower and upper indices?}

Since the velocities are more important than the momenta in our construction
it is useful to pass to a new non-standard symplectic structure in which the momenta are replaced 
by velocities as variables. This is a non-canonical transformation that changes the symplectic structure.
It introduces scalar factors originating from $\{ (\q_i)_k, \dot (\q_j)_k \} = \{ (\q_i)_k, (\p_j)_k \} /m_j$. 
Thus the usual identity block-matrices in the standard symplectic structure are replaced by diagonal matrices with 
entries $1/m_j$.

To verify that the equations of motion separate into the centre of mass motion and the non-trivial part
described by invariants we need to show that the 
Poisson bracket between any quadratic invariant and the centre of mass 
and its derivative (the linear momentum) vanishes.
Clearly 
$\{ \q_{ij} \cdot \q_{kl} , { \bf C} \}  = 0$ and  
$\{ \v_{ij} \cdot \v_{kl} , { \bf P} \}  = 0$.
Now verify that
$\{ \q_{ij} \cdot \v_{kl} , { \bf C} \}  =  \partial_{v_k}  \q_{ij} \cdot \v_{kl} / \mtot +   \partial_{v_l}  \q_{ij} \cdot \v_{kl} / \mtot = 0$, 
similarly $\{ \q_{ij} \cdot \v_{kl} , { \bf P} \}  = 0$.
Finally 
$\{ \q_{ij} \cdot \q_{kl} , { \bf P} \}  = \sum_{m=i,j,k,l} \partial q_m \q_{ij} \cdot \q_{kl}  = 0$,
and similarly $\{ \v_{ij} \cdot \v_{kl} , { \bf C} \}  = 0$.

Thus when we write down the equations of motion generated by $H = H_c + \frac{1}{2\mtot} ||{\bf P}||^2$
we recover the trivial equation $\dot {\bf C} = \{ H, {\bf C} \} =  {\bf P}/ \mtot$ and 
$\dot {\bf P} = \{ H, {\bf P} \} = 0$. More importantly the equations of motion for the invariants
are determined by $H_c$ only. E.g.\ for $I =  \q_{ij} \cdot \v_{kl}$ the equation of motion is
$\dot I = \{ H, I \} = \{ H_c, I \}$, and similarly for all other invariants. Thus from now on 
$H_c$ is our Hamiltonian, and the main task is to 
compute the Poisson bracket between invariants and express them in terms of invariants.

\subsection*{Two bodies}

Before we proceed to the general case we briefly treat $n=2$ where everything can be done by direct computation.
As a basis for the three invariants we choose $\rho_{12} = ||\q_{12}||^2$, $\nu_{12} = ||\v_{12}||^2$, and $\sigma_{12} = \q_{12} \cdot \v_{12}$.
The relative kinetic energy is $K_c = \mu \nu_{12}$, where the reduced mass $\mu$ is given by  $1/\mu = 1/m_1 + 1/m_2$.
The relative momentum is $||{\bf L}_c||^2 = \mu^2 (\rho_{12} \nu_{12} - \sigma_{12}^2)$.
The reduced relative Hamiltonian is 
\[
    H_c = \frac12 \mu \nu_{12} - \mu \frac{G \mtot }{\sqrt{ \rho_{12} } }\,.
\]
The Poisson brackets between the new variables $Z = (\rho_{12}, \nu_{12}, \sigma_{12})$ can be found by 
direct computation, e.g.\
\begin{align*}
\{ ||\q_1 - \q_2||^2, ||\v_1 - \v_2||^2 \} 
& = \sum 4 ( (\q_1)_i - (\q_2)_i) \{ (\q_1)_i - (\q_2)_i, (\v_1)_i - (\v_2)_i \} ((\v_1)_i - (\v_2)_i ) \\
& = \sum 4  ( (\q_1)_i - (\q_2)_i) \left( \frac{1}{m_1} + \frac{1}{m_2} \right)  ((\v_1)_i - (\v_2)_i )\\
& = \frac{4}{ \mu } (\q_1 - \q_2) \cdot (\v_1 - \v_2) 
\end{align*}
where we have used a 2nd index outside parenthesis to denote the components of a vector.
Notice that the bracket is independent of the spatial dimension $d$.
Thus the non-vanishing brackets are
\[
  \{ \rho_{12}, \nu_{12} \} = \frac{4}{\mu}  \sigma_{12} , \quad
  \{ \rho_{12}, \sigma_{12} \} =\frac{2}{\mu}  \rho_{12}, \quad
  \{ \nu_{12}, \sigma_{12} \} = -\frac{2}{\mu}  \nu_{12} \,,
\]
so that the bracket between the invariants is closed, i.e.\ all brackets of invariants can be 
expressed in terms of invariants.
The structure matrix $B$ of the new Poisson bracket is 
\[
  B = \frac{2}{\mu} \begin{pmatrix}
      0 & 2 \sigma_{12} & \rho_{12} \\
      -2 \sigma_{12} & 0 & - \nu_{12} \\
      -\rho_{12} & \nu_{12} & 0 
   \end{pmatrix} \,.
\]
It is easy to check that this Poisson bracket satisfies the Jacobi identity,
but this is also automatic since it is linear, i.e.{} it is a Lie-Poisson bracket.
In fact it is the Lie-Poisson bracket of $\sp(2)$, see e.g.~\cite{MarsdenRat94}.
As expected $||{\bf L}_c||^2$ is a Casimir of the bracket. 
This Casimir can also be interpreted as the Gram determinant of the matrix that has the 
difference vectors $(\q_{12}, \v_{12})$ as columns.
The generic symplectic leaf $\{ ||{\bf L}_c||^2 = l^2 \}$ for $l^2  > 0$ is one sheet (since $\rho_{12} > 0$, $\nu_{12} > 0$) of a 
two-sheeted hyperboloid. The surface define by $\{ H = h \} $ in the reduced space
is a cylinder in the $\sigma_{12}$ direction over a hyperbola in $(\sqrt{\rho_{12}}, \nu_{12})$.
For large negative $h$ there is no intersection between the two surfaces. 
The first tangency corresponds to the 
circular orbit. The intersection is topologically a circle until $h$ reaches zero. For positive $h$
the intersection is an unbounded line corresponding to a scattering solution of the
Kepler problem. 
When the Casimir is zero the symplectic leaf is a cone so that the reduced phase space
is singular. 
We now describe this case.
The vertex of the cone is not accessible for finite energy. 
The collision corresponds to the infinite point on the positive $\nu$ axis.
All intersection curves asymptote towards this point. 
For positive energy there are two unbounded lines meeting in the collision.
For negative energy the intersections form a tear drop with a turning point 
at finite $\rho$.

\section{Algebra of invariant quadratic forms}

We now derive the reduced Poisson bracket between the invariants 
for general $n$ and without choosing a particular basis of invariants.
Each invariant is quadratic in the original variables. The standard symplectic Poisson bracket of two
quadratic functions yields a quadratic function in the same variables. 
The set of all quadratic functions thus is a sub-algebra of all functions.
The main observation is that the Poisson bracket of the invariants
is closed, i.e.\ brackets of invariants can be written in terms of the invariants again. 
This means there is a smaller sub-algebra of invariant quadratic functions within the 
algebra of quadratic functions. From an abstract point of view this follows as 
soon as it has been established that the quadratic invariants contain a Hilbert basis.
Instead of assuming this in the following we directly show that the quadratic invariants
form a sub-algebra.

Verifying this reduces to linear algebra: Each invariant is a certain quadratic form and together they form a vector space.
The set of all quadratic forms (which can be identified with the space of all symmetric matrices) becomes an 
algebra under the Poisson bracket. 
Because the invariants are quadratic the resulting Poisson structure
will in fact be a Lie-Poisson structure, where the entries of the Poisson structure matrix
are linear in the invariants and the linear combinations are given by the structure constants
of some Lie algebra. 

The next Lemma is well known, see, e.g., \cite{LMS93}, and we will adapt it for our particular case in the following.

\begin{lemma} \label{lem:SP}
The symplectic Poisson bracket on $\R^{2m}$ induces an algebra on quadratic forms that is isomorphic to $\sp(2m)$
\end{lemma}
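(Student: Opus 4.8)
The plan is to realize quadratic forms as symmetric matrices and to convert the Poisson bracket into the matrix commutator on $\sp(2m)$. First I would fix the standard symplectic structure matrix $J$ on $\R^{2m}$, characterised by $J^T = -J$ and $J^2 = -\id$, and write an arbitrary homogeneous quadratic form as $f_A(z) = \tfrac12 z^T A z$ with $A$ symmetric. Since $z^T M z$ depends only on the symmetric part of $M$, this parametrises the quadratic forms bijectively by the space $\mathrm{Sym}(2m)$ of symmetric matrices, which has dimension $m(2m+1)$; this already matches $\dim \sp(2m) = m(2m+1)$, so the dimension count is consistent from the outset.

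The key computation is the bracket of two such forms. Using $\nabla f_A = A z$ together with $\{f,g\} = (\nabla f)^T J \nabla g$, one obtains $\{f_A, f_B\} = z^T A J B z$. Symmetrising, and using $(AJB)^T = B^T J^T A^T = -BJA$, this equals $f_{AJB - BJA}$. This single step simultaneously establishes closure, namely that the bracket of two quadratic forms is again a quadratic form, and identifies the induced bracket on $\mathrm{Sym}(2m)$ as the bilinear operation $(A,B) \mapsto AJB - BJA$.

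Next I would exhibit the isomorphism explicitly. Define $\Phi : \mathrm{Sym}(2m) \to \mathfrak{gl}(2m)$ by $\Phi(A) = JA$. A one-line check gives $(JA)^T J + J(JA) = A(-J)(J) + J^2 A = A - A = 0$, so $\Phi(A)$ satisfies the defining relation of $\sp(2m)$. Since $J$ is invertible and the source and target have equal dimension, $\Phi$ is a linear isomorphism onto $\sp(2m)$. Finally I would verify that $\Phi$ intertwines the two brackets: $\Phi(AJB - BJA) = J(AJB - BJA) = (JA)(JB) - (JB)(JA) = [\Phi(A), \Phi(B)]$, the matrix commutator. Hence the Poisson algebra of quadratic forms is isomorphic to $\sp(2m)$, and in particular it is a genuine Lie algebra, with the Jacobi identity inherited for free from the commutator rather than checked by hand.

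The only real obstacle is bookkeeping: the sign convention for $J$, the factor $\tfrac12$ in $f_A$, and the symmetrisation of $AJB$ must all be kept mutually consistent, and the sign in the definition of $\Phi$ must be chosen correctly, since the alternative $\Phi(A) = AJ$ produces an \emph{anti}-isomorphism and would flip the sign of the commutator. Once these conventions are pinned down the argument is entirely linear algebra, and the adaptation to the non-standard symplectic structure with mass factors $1/m_j$ used in the remainder of the paper amounts only to conjugating $J$ by a diagonal matrix.
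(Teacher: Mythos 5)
Your proof is correct and follows essentially the same route as the paper: identify quadratic forms with symmetric matrices, compute $\{Q_A,Q_B\}$ to obtain the bracket $A*B = AJB - BJA$, and map $A \mapsto JA$ onto Hamiltonian matrices, where $*$ becomes the commutator. The only (cosmetic) difference is that you work with the standard $J$ satisfying $J^2=-\id$ and defer the mass-weighted $J_{nd}$ to a conjugation argument, whereas the paper runs the identical computation directly for an arbitrary invertible antisymmetric $J$, writing $\sp(2m,J)$ for the resulting algebra.
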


\begin{proof}
Denote by $Z$ the vector of $2m$ symplectic variables corresponding to (possibly non-standard) symplectic $2m\times 2m$ matrix $J$, where $m$ is an arbitrary positive integer, which in our case will be $m = nd$.
The Poisson bracket is defined by $\{ f, g \} = ( \nabla_Z f,  J  \nabla_Z g)$ where $(,)$ is 
the Euclidean standard scalar product. 
Let $Q_A$ be a quadratic form in Z such that the Hessian of $Q_A$ is the symmetric $2m \times 2m$ matrix $A$, namely 
$Q_A = \frac12 (Z, A Z)$.
The Poisson bracket of two quadratic form induces an algebra of symmetric $2m\times 2m$ matrices defined by
\[
    \{ Q_A, Q_B \} = ( A Z, J B Z) = (Z , A JB Z) = \frac12 (Z, (AJB - BJA) Z) 
\]
where in the last step we symmetrized the matrix $AJB$.
Thus the multiplication in the induced algebra of symmetric matrices of even dimension is defined by 
\nfootnote{did we want to loose the facto 1/2 here?}
\begin{equation} \label{eqn:Algebra}
    A * B =   AJB - BJA  =  2 [ AJB ]_{sym}\,.
\end{equation}
where $[U]_{sym} = \frac12 ( U + U^t)$ gives the symmetric part of a matrix.
Obviously we have $A * B = -B * A$, so it is in fact a Lie algebra.
The mapping $A = \tilde A J^{-1}$ (or $A = J^{-1} \tilde A$) turns the algebra of symmetric matrices with multiplication $*$ 
into the algebra of Hamiltonian matrices of the form $\tilde A = JA$ (with $A$ symmetric) and algebra multiplication 
given by the standard commutator, $\widetilde{A*B} = J(A*B) =  JA JB - JB JA =  [ JA, JB]  = [ \tilde A, \tilde B]$.
Thus the algebra of symmetric $m\times m$ matrices with multiplication $*$  is isomorphic to the 
symplectic algebra $\sp(2m,J)$. Hence the set of quadratic forms with the standard Poisson bracket 
is a Lie-Poisson algebra of $\sp(2m,J)$. We include the sympectic matrix $J$ in the notation to emphasise
that this can be done for arbitrary symplectic structure.
\end{proof}

We will show that the set of quadratic invariants of the $G(d)$ symmetry of the $n$-body problem is a sub-algebra of this algebra.

Denote the phase space variables in  $\R^{2nd}$ by $Z =  (\q_1, \dots, \q_n, \v_1, \dots \v_n)$ 
(where $\q_i$ represents the $d$ components of the vector $\q_i$, similarly for $\v_i$)
with the modified Poisson structure $\{ (\q_i)_k, (\v_j)_l \} = \delta_{ij} \delta_{kl} / m_j$.
The corresponding symplectic matrix is denoted by 
\begin{equation} \label{eqn:JM}
   J_{nd} =  \begin{pmatrix} 0  & M_{nd}  \\ -M_{nd} & 0 \end{pmatrix}
\end{equation}
with diagonal matrix $M_{nd}$ containing the inverse masses.
Now write two symmetric matrices $A$ and $B$ in block form as
\[
   A =  \begin{pmatrix} R_a & W_a \\ W^t_a & P_a \end{pmatrix}, \quad
   B =  \begin{pmatrix} R_b & W_b \\ W^t_b & P_b \end{pmatrix}, \quad 
\]
with symmetric blocks  $R$ and $P$, and off-diagonal block $W$ each of size $nd \times nd$.
%
%
Then the algebra multiplication of $A$ and $B$ induced by the Poisson bracket 
as given in \eqref{eqn:Algebra} with symplectic matrix \eqref{eqn:JM}
can be explicitly written as
\begin{equation} \label{eqn:AstarBblocks}
   A *  B = \begin{pmatrix} 2  [ W_a M_{nd} R_b - W_b M_{nd} R_a ]_{sym} & [ W_a,  W_b]_M + R_b M_{nd} P_a - R_a M_{nd} P_b \\
          [ W_a, W_b]_M^t + P_a M_{nd} R_b - P_b M_{nd} R_a  & 2[ P_a M_{nd} W_b - P_b M_{nd} W_a]_{sym}
   \end{pmatrix} 
\end{equation}
where $[U,V]_M = UM_{nd}V - VM_{nd}U$ is a ``twisted'' commutator of matrices.

The invariant quadratic forms 
have special structure for two reasons. 
First, there is some redundancy because all the components
of the vectors $\q_i$ and $\v_i$ are treated in the same way. 
Second, there is special structure because the quadratic forms are invariant
under translations and boosts.

Before we describe this structure we remark that it is 
possible to work with momenta instead of with velocities.
By the non-symplectic scaling $\v_i \to \v_i  m_i$ the non-standard symplectic structure $J_{nd}$ 
can be transformed into the standard symplectic matrix $J = T^{-1}  J_{nd} T^{-t}$  where $T = \diag( 1, M_{nd})$. 
The quadratic form $Q_A$ is transformed into $Q_B$ with $B = T^t A T$.
These transformed quadratic forms form an algebra derived from the standard Poisson structure
with standard symplectic matrix $J$. 
However, now the quadratic forms depend on the masses while the new $J$ is independent of the masses.
We prefer to work with velocities since the mass dependence of $J_{nd}$ is simpler than the mass dependence of the transformed 
quadratic form.


Because the components of $\q_i$ and $\v_j$ are all treated in the same way in the invariant quadratic forms the
dimension $d$ does not play a role, 
which is the statement of the next Lemma.
\begin{lemma}
The sub-algebra of invariant quadratic forms is independent of the spatial dimension $d$.
\end{lemma}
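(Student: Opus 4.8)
The plan is to exhibit a single, $d$-independent space of $2n\times 2n$ symmetric matrices that parametrises the invariant quadratic forms in such a way that the multiplication \eqref{eqn:Algebra} is transported to a bracket on this space carrying no reference to $d$. First I would use the diagonal action of $O(d)$ to realise the phase space as a tensor product. After reordering the $2nd$ coordinates so that $\R^{2nd}\cong\R^{2n}\otimes\R^{d}$, where the first factor indexes the $2n$ slots $\q_1,\dots,\q_n,\v_1,\dots,\v_n$ and the second factor the $d$ spatial components, the modified symplectic matrix \eqref{eqn:JM} factorises as $J_{nd}=J_n\otimes\id_d$, with $\id_d$ the $d\times d$ identity matrix and $J_n=\begin{pmatrix}0 & M_n\\ -M_n & 0\end{pmatrix}$, $M_n=\diag(1/m_1,\dots,1/m_n)$. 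This is just the statement that each inverse mass $1/m_i$ occurs $d$ times, once per component, and that the symplectic pairing acts as the identity across components.

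The second step is to pin down the invariant quadratic forms. By the first fundamental theorem of invariant theory for $O(d)$ acting diagonally, every $O(d)$-invariant quadratic function on $(\R^d)^{2n}$ is a linear combination of the pairwise scalar products of the $2n$ slots, which is exactly the description in terms of scalar products of difference vectors used above; equivalently its Hessian has the tensor form $\mathcal A\otimes\id_d$ for a symmetric $2n\times2n$ matrix $\mathcal A$. (For $SO(d)$ the only extra invariants are determinants, which are not quadratic, so the quadratic invariants are unchanged, matching the footnote above.) Translation and boost invariance act on the first tensor factor alone: in velocity variables both add a fixed vector to all $\q_i$, respectively all $\v_i$, so a form is invariant precisely when $\mathcal A$ annihilates the two vectors $(\mathbf e,0)$ and $(0,\mathbf e)$, where $\mathbf e=(1,\dots,1)^t\in\R^n$. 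Let $V$ be the space of symmetric $2n\times2n$ matrices obeying these two linear constraints; it is manifestly independent of $d$, and $\dim V=(2n-1)(n-1)$, in agreement with Theorem~\ref{thm:spfin}. The map $\Phi_d:\mathcal A\mapsto Q_{\mathcal A\otimes\id_d}$ then sends $V$ bijectively onto the invariant quadratic forms: it is surjective by the invariant-theory description and injective for every $d\ge1$, since tensoring with $\id_d$ is injective and the Hessian determines the form.

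The third step is the short computation that makes $d$ disappear. For $\mathcal A,\mathcal B\in V$ the identity $(X\otimes\id_d)(Y\otimes\id_d)=XY\otimes\id_d$ together with $J_{nd}=J_n\otimes\id_d$ gives
\begin{align*}
 (\mathcal A\otimes\id_d)*(\mathcal B\otimes\id_d)
 &=(\mathcal A J_n\mathcal B-\mathcal B J_n\mathcal A)\otimes\id_d \\
 &=(\mathcal A *_n \mathcal B)\otimes\id_d,
\end{align*}
where $\mathcal A *_n \mathcal B:=\mathcal A J_n\mathcal B-\mathcal B J_n\mathcal A$ is the analogue of \eqref{eqn:Algebra} built from $J_n$. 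Since the left-hand side is again an invariant form, $\mathcal A *_n \mathcal B$ lies in $V$, so $V$ is closed under $*_n$ and $\Phi_d$ is a Lie-algebra isomorphism onto the sub-algebra of invariant quadratic forms. As $(V,*_n)$ contains no reference to $d$, this sub-algebra is the same for every $d$, which is the assertion; its isomorphism type $\sp(2n-2)$ is left to Theorem~\ref{thm:spfin}.

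The main obstacle is the second step: establishing that there are no invariant quadratic forms beyond the tensor ones $\mathcal A\otimes\id_d$, i.e.\ invoking the first fundamental theorem for $O(d)$ and checking that it applies uniformly in $d$ (including the low-dimensional range $d<2n$, where Gram-determinant syzygies relate the invariants but create no new \emph{quadratic} ones). Everything after that is bookkeeping with tensor products; the only other point needing care is the harmless coordinate reordering realising the factorisation $J_{nd}=J_n\otimes\id_d$, which is a permutation conjugation and hence leaves the algebra structure untouched.
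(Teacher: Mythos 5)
Your proposal is correct and follows essentially the same route as the paper: the proof there also rests on writing every invariant form as $\hat A\otimes\id_d$, observing that $J_{nd}=\hat J\otimes\id_d$ in the chosen ordering, and letting the Kronecker identity $(\hat A\otimes\id)(\hat B\otimes\id)=\hat A\hat B\otimes\id$ cancel the $d$-dependence in the product \eqref{eqn:Algebra}. What you add---the first-fundamental-theorem argument that \emph{all} $O(d)$-invariant quadratic forms are tensor forms, and the kernel/shift-invariance characterisation of the first factor with its dimension count---is sound but goes beyond what the lemma requires, since the paper's sub-algebra is by construction the span of scalar products of difference vectors, and the kernel description is developed separately just before Lemma~\ref{lem:blockLap}. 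One small inaccuracy in a side remark: for $d=2$ the determinant of two vectors \emph{is} a quadratic $SO(2)$-invariant, so your parenthetical claim that determinants are never quadratic fails there; this does not affect the lemma, which concerns invariance under the full $O(d)\subset G(d)$, where determinants change sign under reflections.
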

\begin{proof}
The ordering of variables used is such that 
the components of vectors are consecutive entries in $Z$.
Since all our invariant quadratic forms come from forming scalar products of differences 
of vectors all the $d$ components of vectors are treated in the same way. 
Thus the matrix $A$ of an invariant quadratic form $Q_A$ has the form 
$A = \hat A \otimes \id_d$ where $\id_d$ is the $d$-dimensional identity matrix
and $\otimes$ denotes the Kronecker product.
\nfootnote{maybe earlier mention that $T^*\R^{nd} = \R^d \otimes \R^{2n} $}
Note that (as a result of the chosen ordering of variables) 
also $J_{nd}$ can be written using the Kronecker product, 
namely $J_{nd} = \hat J \otimes \id_d$. 
Because of the general identity
\[
    (\hat A \otimes \id) (\hat B \otimes \id ) = ( \hat A \hat B \otimes \id)  
\]
the dimension $d$ drops out:
\[
    (\hat A \otimes \id) *  (\hat B \otimes \id ) =2 [  \hat A \hat J \hat B \otimes \id ]_{sym}
    = 2 [  \hat A \hat J \hat B ]_{sym}  \otimes \id \,.
\]
Thus we can define an induced algebra 
\begin{equation} \label{eqn:AstarB}
   \hat A * \hat B = ( \hat A\hat J\hat B - \hat B\hat J\hat A), \quad
     \hat J =  \begin{pmatrix} 0  & \hat M   \\ - \hat M & 0 \end{pmatrix}, \quad
     \hat M = {\rm diag}\left( \frac{1}{m_1}, \dots, \frac{1}{m_n}\right) 
\end{equation}
as before, except that now the quadratic forms have $2n\times 2n$ matrices instead of  $2nd \times 2nd$ matrices.
\end{proof}
It should be noted that the dimension reduced $2n \times 2n$ matrices have the same 
composition law in block form as stated in \eqref{eqn:AstarBblocks}, except that $M_{nd}$ is replaced by $\hat M$.


The more interesting structure of the sub-algebra comes from the fact
the invariant quadratic forms  
are invariant under the symmetry group operations of translations and  boosts.
%
%
There are three basic types of invariants: $\q_{ij} \cdot \q_{kl}$,  $\v_{ij} \cdot \v_{kl}$, and  $\q_{ij} \cdot \v_{kl}$.
The corresponding matrix blocks are $\hat R$, $\hat P$, and $\hat W$, respectively.
The symmetric matrix $\hat R$ corresponding to  $\q_{ij} \cdot \q_{kl}$ has non-zero entries $+1$ at $ik$,  $jl$, $ki$,  $lj$
and $-1$ at $il$, $jk$, $li$, $kj$, so that $\hat R$ is a symmetric Laplacian matrix (row sums are zero). The same set of entries is found 
in $\hat P$ for $\v_{ij} \cdot \v_{kl}$. For the mixed invariant the situation is slightly different. The non-zero entries in $\hat W$ 
corresponding to $\q_{ij} \cdot \v_{kl}$ are $+1$ at $ik$, $jl$ and $-1$ at $il$, $jk$. 
Unless either $i=k$ and $j=l$ or $i=l$ and $j=k$ the matrix $\hat W$ is not symmetric. 
No choice of indices in  $\q_{ij} \cdot \v_{kl}$ gives an antisymmetric $\hat W$.

A quadratic form $Q_A(Z)$ is called {\em shift invariant} if it is invariant under translations and boosts. 
After passing from $nd$ to $n$ dimensions this simply means that adding the same multiple of $(1,0)$ 
to all pairs $(\q_i, \v_i)$ (translations), and adding the same multiple of $(t,1)$ to all pairs $(\q_i,\v_i)$  (boosts) does 
not change the value of the invariant form. 
Define the vector ${\bf t}_1$ with the first $n$ components equal to 1, and the remaining $n$ components equal to 0,
and the vector ${\bf t}_2$ with the first $n$ components equal to 0, and the remaining $n$ components equal to 1.
Shift invariant quadratic forms satisfy $Q_A(Z + {\bf t}_1) = Q_A(Z)$ 
and $Q_A(Z + {\bf t}_2) = Q_A(Z)$.
Accordingly the matrix $A$ of a shift invariant quadratic form has the vectors ${\bf t}_1$ and ${\bf t}_2$ in its kernel. 
The specific form of the two vectors now implies that $A$ is $2\times 2$ {\em block Laplacian}.
Here Laplacian denotes a matrix that has a vector with all $1$'s in its kernel, 
or, equivalently, a matrix that has all row sums equal to zero.

The main observation is that the symmetric block Laplacian matrices 
form a sub-algebra of all symmetric matrices under compositions $*$.
Note that the diagonal blocks $\hat R$ and $\hat P$ are symmetric, while the off diagonal 
block in general is not. It can be decomposed into symmetric Laplacian part $\hat S$ 
and antisymmetric Laplacian part $\hat D$.
We could treat the off-diagonal block $\hat W = \hat S + \hat D$ without splitting it into 
symmetric and antisymmetric part. 



\begin{lemma} \label{lem:blockLap}
The symmetric $2n \times 2n$ matrices of the form
\[
\begin{pmatrix}
\hat R & \hat S + \hat D \\ \hat S - \hat D & \hat P
\end{pmatrix}
\]
where $\hat R$, $\hat P$, and $\hat S$ are $n\times n$ symmetric Laplacian matrices, 
and $\hat D$ is antisymmetric Laplacian
form a sub-algebra of the symmetric $2n \times 2n$ matrices under  composition \eqref{eqn:AstarB}.
\end{lemma}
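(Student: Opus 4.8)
The plan is to recognise the matrix family in the statement as exactly the set of \emph{symmetric} $2n\times 2n$ matrices annihilating the two shift vectors ${\bf t}_1$ and ${\bf t}_2$, and then to check that this property survives the bracket $*$ of \eqref{eqn:AstarB} almost for free. Writing a symmetric matrix in block form $\hat A = \begin{pmatrix} \hat R_a & \hat W_a \\ \hat W_a^t & \hat P_a\end{pmatrix}$ and splitting the off-diagonal block $\hat W_a = \hat S_a + \hat D_a$ into its symmetric and antisymmetric parts, the two conditions $\hat A\,{\bf t}_1 = 0$ and $\hat A\,{\bf t}_2 = 0$ read $\hat R_a {\bf 1} = \hat P_a {\bf 1} = \hat S_a {\bf 1} = \hat D_a {\bf 1} = 0$, where ${\bf 1}$ is the all-ones vector in $\R^n$; that is, $\hat R_a,\hat P_a,\hat S_a$ are symmetric Laplacian and $\hat D_a$ is antisymmetric Laplacian, with $\hat W_a^t = \hat S_a - \hat D_a$ as the bottom-left block. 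This is precisely the form in the lemma, so the claim reduces to showing that the linear space
\[
  \mathcal{L} = \{\, \hat A = \hat A^t : \hat A\,{\bf t}_1 = \hat A\,{\bf t}_2 = 0 \,\}
\]
is closed under $*$. This reformulation is just the ``block Laplacian'' observation already made in the text before the statement.

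Two things must then be verified for $\hat A * \hat B \in \mathcal{L}$. First, $\hat A * \hat B$ is again symmetric: this is built into the construction, since $\hat A * \hat B = 2[\hat A \hat J \hat B]_{sym}$ is symmetric by Lemma~\ref{lem:SP} (equivalently $(\hat A \hat J \hat B - \hat B \hat J \hat A)^t = \hat A \hat J \hat B - \hat B \hat J \hat A$ using $\hat J^t = -\hat J$ and $\hat A = \hat A^t$, $\hat B = \hat B^t$). Second, and this is the whole content, that $\hat A * \hat B$ still annihilates ${\bf t}_1$ and ${\bf t}_2$.

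The kernel condition follows from a one-line associativity argument. For $i=1,2$,
\[
  (\hat A * \hat B)\,{\bf t}_i = \hat A \hat J \hat B\,{\bf t}_i - \hat B \hat J \hat A\,{\bf t}_i = \hat A \hat J (\hat B\,{\bf t}_i) - \hat B \hat J (\hat A\,{\bf t}_i) = 0,
\]
because $\hat B\,{\bf t}_i = 0$ and $\hat A\,{\bf t}_i = 0$ by hypothesis; the symplectic matrix $\hat J$ wedged between the factors never has to act on ${\bf t}_i$, since the right-hand factor already kills it. Hence $\hat A * \hat B$ is symmetric and lies in the common kernel of ${\bf t}_1,{\bf t}_2$, so $\hat A * \hat B \in \mathcal{L}$, which by the identification of the first paragraph is exactly the asserted block form.

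There is essentially no analytic obstacle here: the only point that must be got right is the translation between the explicit ``Laplacian blocks'' description and the coordinate-free kernel condition $\hat A\,{\bf t}_i = 0$, after which closure is automatic. I would deliberately avoid the alternative route of grinding through the block composition formula \eqref{eqn:AstarBblocks} and checking separately that each of $2[\hat W_a \hat M \hat R_b - \hat W_b \hat M \hat R_a]_{sym}$, the twisted commutator $[\hat W_a,\hat W_b]_{\hat M}$, and the cross terms $\hat R_b \hat M \hat P_a - \hat R_a \hat M \hat P_b$ is (doubly) Laplacian; that computation is both longer and conceptually murkier than the kernel argument, although it does serve as a useful independent check on the block formula itself.
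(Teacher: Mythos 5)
Your proof is correct, but it takes a genuinely different route from the paper's. The paper argues blockwise: it takes the Laplacian property of a block to mean that $(1,\dots,1)^t$ lies in its kernel, notes that Laplacian matrices are closed under matrix multiplication and addition, and then reads off from the block composition law (the $2n$-dimensional version of \eqref{eqn:AstarBblocks}) that every block of $\hat A * \hat B$ --- including the symmetric and antisymmetric parts of the off-diagonal block --- is again Laplacian. You instead package the entire block-Laplacian condition into the single coordinate-free statement that $\hat A$ is symmetric with $\hat A\,{\bf t}_1 = \hat A\,{\bf t}_2 = 0$ (which is exactly the shift-invariance discussion the paper gives just before the lemma), and then closure is a one-line computation: in $\hat A \hat J \hat B - \hat B \hat J \hat A$ the right-hand factor kills ${\bf t}_i$ before $\hat J$ ever acts on it. Your version is shorter and quietly avoids a wrinkle that the paper's wording glosses over: the block products that actually occur, such as $\hat S_a \hat M \hat R_b$, have the non-Laplacian mass matrix $\hat M$ interposed, so ``Laplacian matrices are closed under multiplication'' does not apply verbatim; one needs the two-sided (row- and column-sum) Laplacian property of the outer factors, which your global kernel argument never has to mention. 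What the paper's blockwise phrasing buys in return is bookkeeping that is reused immediately: Theorem~\ref{thm:PBblocks} needs to know exactly which Laplacian blocks appear in each entry of the composition, and the lemma's proof doubles as a warm-up for that computation. Note also that your kernel formulation is precisely the form in which the lemma is invoked in the proof of Theorem~\ref{thm:spfin}, where the sub-algebra is described as the block-Laplacian matrices whose kernel is $U = \mathrm{span}\{{\bf t}_1, {\bf t}_2\}$; so both packagings occur in the paper, and your proof simply merges the lemma and that reformulation into one step.
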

\begin{proof}
By definition, a (symmetric or antisymmetric) matrix is Laplacian if the vector $(1, \dots, 1)^t$ is 
in the kernel of the matrix. 
From this definition it is clear that Laplacian matrices form a subgroup under matrix multiplication.
The algebra multiplication only involves matrix multiplication and addition of  blocks, and 
thus  $\hat A*\hat B$ is block Laplacian if $\hat A$ and $\hat B$ are block Laplacian. 
In particular the symmetric and antisymmetric part of the off-diagonal 
block can again be written explicitly in terms of matrix multiplications and addition of blocks.
Therefore the symmetric and antisymmetric part of the off-diagonal block are both again
Laplacian.
\end{proof}

We already recalled in Lemma~\ref{lem:SP} that the algebra of quadratic $2m\times 2m$ forms is isomorphic to $\sp(2m)$.
We just showed in Lemma~\ref{lem:blockLap} that the block-Laplacian matrices form a sub-algebra.
This bring us to the following theorem:

\begin{theorem} \label{thm:spfin}
The Algebra of $G(d)$ invariant quadratic forms of the  $n$-body problem in $\R^d$ is 
isomorphic to $\sp(2n-2)$ and the invariant quadratic forms are given by $2\times 2$ 
block-Laplacian matrices.
\end{theorem}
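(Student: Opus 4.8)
The plan is to combine Lemma~\ref{lem:SP} and Lemma~\ref{lem:blockLap} with a symplectic reduction by the two-dimensional shift plane. By the dimension-independence lemma it suffices to work with the $2n\times 2n$ matrices under the composition \eqref{eqn:AstarB}, and the discussion preceding the theorem already shows that the matrix $\hat A$ of a $G(d)$-invariant quadratic form is exactly a $2\times 2$ block-Laplacian matrix, i.e.\ one annihilating both shift vectors ${\bf t}_1$ and ${\bf t}_2$. By Lemma~\ref{lem:blockLap} these form a sub-algebra under $*$, so the remaining task is to identify this sub-algebra with $\sp(2n-2)$.

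First I would pass to coordinates adapted to the shifts. Choose an invertible $G$ whose last two columns are ${\bf t}_1$ and ${\bf t}_2$ and whose first $2n-2$ columns span a complement; then $Z = G\tilde Z$ sends a quadratic form $Q_{\hat A}$ to $Q_{\tilde A}$ with $\tilde A = G^t\hat A G$, and sends the structure matrix $\hat J$ to $\tilde J = G^{-1}\hat J G^{-t}$. Since the Poisson bracket is coordinate-independent, $\tilde A * \tilde B = G^t(\hat A * \hat B)G$ (with $*$ on the left formed from $\tilde J$), so $\hat A\mapsto G^t\hat A G$ is an algebra isomorphism onto the $*$-algebra built from $\tilde J$. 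When $\hat A$ is block-Laplacian one has $\hat A{\bf t}_1 = \hat A{\bf t}_2 = 0$, so the last two columns of $\hat A G$ vanish, and hence (using symmetry of $\tilde A$) the last two rows and columns of $\tilde A$ vanish. Thus $\tilde A = \left(\begin{smallmatrix} A' & 0\\ 0 & 0\end{smallmatrix}\right)$ with $A'$ symmetric of size $2n-2$, and $\hat A\leftrightarrow A'$ is a linear bijection onto all symmetric $(2n-2)\times(2n-2)$ matrices, the two spaces both having dimension $(n-1)(2n-1)$.

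The decisive step is to check that the composition both closes in and is governed entirely by the top-left block. Writing $\tilde J$ in the same $(2n-2)+2$ block form, a short computation shows that for matrices supported in the top-left block the product $\tilde A\tilde J\tilde B$ depends only on the top-left block $J'$ of $\tilde J$ and is again supported there, so $\tilde A * \tilde B$ becomes $A'J'B' - B'J'A'$ in that block. It remains to see that $J'$, which is antisymmetric because $\hat J$ is, is also non-degenerate; here the geometry of the problem enters. I would observe that ${\bf t}_1$ and ${\bf t}_2$ are the Hamiltonian directions of the conjugate pair ${\bf P}$ and $\mtot{\bf C}$, and compute $\omega({\bf t}_1,{\bf t}_2) = {\bf t}_1^t\hat J^{-1}{\bf t}_2 = -\mtot \neq 0$. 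Hence $\mathrm{span}({\bf t}_1,{\bf t}_2)$ is a symplectic $2$-plane; choosing the first $2n-2$ columns of $G$ to be its $\hat J^{-1}$-orthogonal complement makes $\tilde J$ block diagonal with non-degenerate top-left block $J'$.

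With $J'$ non-degenerate, $\hat A\mapsto A'$ is an algebra isomorphism from the block-Laplacian sub-algebra onto the symmetric $(2n-2)\times(2n-2)$ matrices with composition $A'*'B' = A'J'B' - B'J'A'$, which by Lemma~\ref{lem:SP} is $\sp(2n-2,J')\cong\sp(2n-2)$. Together with the dimension-independence lemma this identifies the algebra of $G(d)$-invariant quadratic forms with $\sp(2n-2)$, independently of $d$. The main obstacle I anticipate is exactly the non-degeneracy of $J'$: the rest is bookkeeping, but the whole identification would collapse if the shift plane were isotropic or degenerate, so the computation $\omega({\bf t}_1,{\bf t}_2)\neq 0$ is the load-bearing point.
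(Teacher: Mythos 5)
Your proposal is correct, and its skeleton is the same as the paper's: identify the $G(d)$-invariant forms with the $2\times 2$ block-Laplacian matrices (established before the theorem), observe that the shift plane $U=\mathrm{span}\{{\bf t}_1,{\bf t}_2\}$ is symplectic, pass to its $(2n-2)$-dimensional symplectic complement, and invoke Lemma~\ref{lem:SP} there together with Lemma~\ref{lem:blockLap}. Where you genuinely diverge is in the execution, and your version is the more careful one. The paper works with the bilinear form $(\cdot,\hat J\,\cdot)$, checks $({\bf t}_1,\hat J{\bf t}_2)=\sum 1/m_i\neq 0$, and restricts to the corresponding orthogonal complement $U^\omega$; you instead pass to an adapted basis $G$, note that the Poisson tensor transforms contravariantly, $\tilde J=G^{-1}\hat J G^{-t}$, so that the relevant orthogonality is the one that block-diagonalizes $\tilde J^{-1}=G^{t}\hat J^{-1}G$, namely orthogonality with respect to $\hat J^{-1}$, and you verify ${\bf t}_1^{t}\hat J^{-1}{\bf t}_2=-\mtot\neq 0$. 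For unequal masses $\hat J^{-1}$ is not proportional to $\hat J$, so these are different complements (yours is the mass-weighted one, cut out by zero centre of mass and zero total momentum) and different non-degeneracy conditions; yours is the load-bearing one, since non-degeneracy of the top-left block $J'$ of $\tilde J$ --- equivalently, the statement that restriction of shift-invariant forms is an algebra isomorphism onto $\sp(2n-2,J')$ --- is equivalent to ${\bf t}_1^{t}\hat J^{-1}{\bf t}_2\neq 0$, not to $({\bf t}_1,\hat J{\bf t}_2)\neq 0$. For positive masses both quantities are non-zero, so the paper's conclusion is unaffected, but its proof silently identifies the Poisson tensor with the symplectic form, which is harmless only for the standard structure where $J^{-1}=-J$. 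Your explicit congruence bookkeeping, the bijectivity of $\hat A\mapsto A'$, and the dimension count $(n-1)(2n-1)$ make the identification airtight, at the cost of being longer than the paper's appeal to standard facts about symplectic subspaces.
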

\begin{proof}
In order to show that the sub-algebra is isomorphic to $\sp(2n-2, \hat J)$ we show 
that this $\sp(2n-2)$ consists of the the symplectic mappings of a certain $2n-2$ dimensional 
symplectic subspace of $\R^{2n}$.
\nfootnote{maybe it is strange to keep the $J$ in the notation of $\sp$, since all such spaces are symplectomorphic?}
The subspace $U = \mathrm{span}\{ {\bf t}_1, {\bf t}_2 \}$ is a 2-dimensional symplectic subspace of $\R^{2n}$,
since the symplectic form restricted to $U$ is non-degenerate, namely 
$( {\bf t}_1, \hat J{\bf t}_2) = \sum 1/m_i \not = 0$.
Consequently the symplectic orthogonal complement 
$U^\omega$ for the symplectic form $\omega = (\cdot, \hat J \cdot)$
is a symplectic subspace of $\R^{2n}$ of dimension $2n-2$, 
see, e.g.~\cite{daSilva01}.
\nfootnote{use $\omega$ to talk about quadratic form? See Tabachnikov.}
We already recalled in Lemma~\ref{lem:SP} that $\sp(2m)$ is isomorphic to the the algebra of quadratic forms of $\R^{2m}$
with composition~$*$ induced by the Poisson bracket. According to Lemma~\ref{lem:blockLap} block-Laplacian matrices 
whose kernel is the subspace $U$ form a sub-algebra of this algebra. Hence by restriction to the 
symplectic subspace $U^\omega$ this defines an algebra of quadratic forms isomorphic to $\sp(2n-2)$.
\end{proof}

\noindent
As mentioned earlier the space of invariant quadratic forms has dimension $(2n-1)(n-1)$, which is exactly the dimension of $\sp(2n-2)$.

\subsection*{Dual Pair}

As discussed in \cite{LMS93} the momentum map of the $O(d)$ action on $\R^{2nd}$ 
and the $Sp(2n)$ action with Lie algebra $\sp(2n)$ identified with invariant quadratic forms
are a dual pair. If we replace $\R^{2nd}$ by the the vector space of difference vectors
of dimension $2(n-1)d$ the same construction gives a dual pair for the action of $O(d)$ 
and $Sp(2n-2)$ on difference vectors, or, similarly for the action of $G(d)$ and $Sp(2n-2)$
on the original space $\R^{2nd}$. 
However, explicitly constructing the action of the symplectic group is not so simple in the present case.

\nfootnote{all components of the vectors $\q_i$ and $\p_i$ are treated in the same way. How to express this
in terms of invariants? It means that the action of $Sp(2n)$ on the higher dimensional space $T^*\R^{nd}$ 
can be written in terms of the Kronecker product.}

\nfootnote{Do the two actions commute? Do we need to explicitly show that  the action of one on the leaves of the other one transitive?}

\section{Structure of the reduced Poisson bracket}
\label{sec:BasisPoi}

We already mentioned a possible basis for the vector space of invariant quadratic forms  which consists of the entries of the 
Gram matrix of the $2n-2$ vectors $\q_{ij}$ and $\v_{ij}$ for fixed $j$, $i \not = j$. 
Such a basis may be useful if mass $j$ is much bigger than all others.
We are more interested in the general case and are now going to
describe a ``nice'' basis which makes the equations of motion simple
and symmetric. 

As part of the basis for the quadratic invariants we choose the mutual distances squared $\rho_{ij} = ||\q_{ij}||^2$
and the mutually relative speeds squared $\nu_{ij} = ||\v_{ij}||^2$, since these are needed in order to write 
the Hamiltonian $H_c$ in a simple way.
As additional invariants we choose the $n(n-1)/2$ scalar products $\sigma_{ij} = \q_{ij} \cdot \v_{ij}$, $i < j \le n$.
We write $\rho$, $\nu$, $\sigma$ for the column vectors with entries $\rho_{ij}$, $\nu_{ij}$, $\sigma_{ij}$, $i < j \le n$,
respectively.
Now we are still missing $(n-1)(n-2)/2$ invariants to complete the basis of dimension $(2n-1)(n-1)$.
A possible choice is the upper right triangle in the Gram matrix with entries
$ \q_{1j} \cdot \v_{1k}$, $j > k$, depending on the choice of basis of difference vectors, 
but this does not give a nice symmetry in the resulting Poisson bracket.
For now denote any choice that completes the basis of quadratic invariants by $\delta_{ij}$, $i <  j < n$.
For $n = 3$ there are altogether $10 = 3+3+3+1$ independent invariants.
For $n=3$ we will see that $\q_{23} \cdot \v_{13} - \q_{13} \cdot \v_{23}$ is a good choice for $\delta$.
For $n=2$ there is no $\delta$, while for $n=4$ there are three $\delta$s.
%
%

There is a natural block-structure in the  Poisson structure matrix $B$ that arrises by pairing variables 
of the 4 ``types'' $(\rho, \nu, \sigma, \delta)$. The structure of these blocks can be computed 
from the algebra operation in block form \eqref{eqn:AstarBblocks}. 
Symbolically we will denote these blocks by $\{ \rho, \rho \} $, $\{ \rho, \nu \} $ etc.
It is easy to see that $\{ \rho, \rho \}  = 0$ since $ P_a =  P_b =  W_a =  W_b = 0$ implies $ A* B  = 0$, 
similarly $\{ \nu, \nu \}  = 0 $.
Moreover, all blocks can be written in terms of certain fundamental linear functions, 
e.g. $\{ \rho, \sigma \}$ is a linear function of $\rho$ 
while $\{ \sigma, \nu \}$ is that {\em same} linear function evaluated on $\nu$. 
This arrises because for $\rho$, $\nu$ and $\sigma$ we have chosen the same
basis, more precisely the matrix $R$ that corresponds to,  say, $\rho_{ij}$ is the 
same as the matrix $P$ that corresponds to $\nu_{ij}$ and it is also the same 
as the matrix $S$ that corresponds to $\sigma_{ij}$.
This gives another justification for our particular choice of basis of 
quadratic invariant functions.
In this way the whole structure is built from only four linear functions,
only three of which appear in the Hamiltonian vectorfield:
Since the Hamiltonian is independent of $\delta$ the block $\{\delta, \delta\}$ does not appear.
The complete structure is described by the following theorem:

\begin{theorem} \label{thm:PBblocks}
The Poisson structure matrix $B$ for the invariant variables 
$( \rho, \nu, \sigma, \delta)$ has the block form
\[
B = \begin{pmatrix}
0 & 2(L(\sigma) - \Delta) & L(\rho) &  v(\rho) \\
. & 0 & - L(\nu) &  v(\nu) \\
. & . & \Delta &  v(\sigma)\\
. & . & . & \Sigma
\end{pmatrix}
\]
where $\Delta = \Delta( \delta)$ and $\Sigma = \Sigma(\sigma)$ and all four matrix-valued functions 
$L, v, \Delta, \Sigma$
are linear in their arguments
and have coefficients that are of degree $-1$ in the masses $m_i$. In addition $L$ is symmetric,
while $\Delta$ and $\Sigma$ are anti-symmetric.
\end{theorem}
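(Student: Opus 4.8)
My plan is to read every block of $B$ straight off the $*$-multiplication formula \eqref{eqn:AstarBblocks} (with $M_{nd}$ replaced by $\hat M$, as in \eqref{eqn:AstarB}). By Lemma~\ref{lem:SP} the bracket of two quadratic invariants is again a quadratic invariant, and by Theorem~\ref{thm:spfin} it therefore re-expands linearly in the basis $(\rho,\nu,\sigma,\delta)$, so linearity of $L,v,\Delta,\Sigma$ is automatic and the only work is to identify, for each pair of types, which slot the product lands in and with which coefficients. The single observation that organises everything is that $\rho_{ij}$, $\nu_{ij}$ and $\sigma_{ij}$ are represented by one and the same elementary symmetric Laplacian $E_{ij}$ (with $+1$ in positions $ii,jj$ and $-1$ in positions $ij,ji$), sitting in the $\hat R$-, $\hat P$- and $\hat S$-slot respectively, while each $\delta$ is represented by an antisymmetric Laplacian in the off-diagonal slot with $\hat R=\hat P=\hat S=0$.

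I would then go slot by slot. Two $\hat R$-matrices (or two $\hat P$-matrices) fed into \eqref{eqn:AstarBblocks} give zero, so $\{\rho,\rho\}=\{\nu,\nu\}=0$. An $\hat R$-matrix times an $\hat S$-matrix lands wholly in the $\hat R$-slot, so $\{\rho,\sigma\}$ is linear in $\rho$; the analogous product for $\{\nu,\sigma\}$ lands in the $\hat P$-slot and is governed by the very same symmetrised product $[E_{ij}\hat M E_{kl}]_{sym}$ up to sign, which is precisely why $\{\rho,\sigma\}=L(\rho)$ and $\{\nu,\sigma\}=-L(\nu)$ with one and the same $L$. An $\hat R$-matrix times a $\hat P$-matrix lands in the off-diagonal slot and equals $-4E_{ij}\hat M E_{kl}$: its symmetric part reproduces the expression that defined $L$, now read off in the $\sigma$-basis, giving the term $2L(\sigma)$ (the factor $2$ coming from the normalisation $\hat R=2E_{ij}$ of the $\rho$-slot against $\hat S=E_{ij}$ of the $\sigma$-slot), while its antisymmetric part is the twisted commutator $[E_{ij},E_{kl}]_{\hat M}$. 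That same commutator is exactly what two $\hat S$-matrices produce, so $\{\sigma,\sigma\}=\Delta(\delta)$ and the antisymmetric part of $\{\rho,\nu\}$ is $-2\Delta$, combining to the entry $2(L(\sigma)-\Delta)$. The $\delta$-column is handled identically: products with an $\hat R$- or $\hat P$-matrix land in the corresponding diagonal slot and, because both are driven by $[E_{ij},\hat D_{kl}]_{\hat M}$, give one common function $v$ evaluated on $\rho$ and on $\nu$; the product of an $\hat S$- with a $\hat D$-matrix is symmetric and lands in the off-diagonal slot, reproducing the same $v$ on $\sigma$; and two $\hat D$-matrices give the twisted commutator of two antisymmetric Laplacians, which closes on the invariants and supplies the $(4,4)$-block $\Sigma$.

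The structural qualifiers then follow cheaply. The antisymmetry of the diagonal blocks $\Delta$ and $\Sigma$ is immediate from $A*B=-B*A$, i.e.\ from $B$ being the structure matrix of a Poisson bracket. Symmetry of $L$ follows because $[E_{ij}\hat M E_{kl}]_{sym}=[E_{kl}\hat M E_{ij}]_{sym}$, so the symmetrised product is invariant under exchanging the two index pairs and hence $\{\rho_{ij},\sigma_{kl}\}=\{\rho_{kl},\sigma_{ij}\}$. The degree $-1$ homogeneity in the masses is clear: $\hat M=\diag(1/m_1,\dots,1/m_n)$ is the only mass-bearing object and enters each $*$-product exactly once through $\hat J$, while the basis matrices $E_{ij}$ and the antisymmetric Laplacians are mass-free.

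The one genuinely non-trivial part is the bookkeeping behind the coincidences of linear maps---that a single $L$ governs three blocks, a single $\Delta$ two, and a single $v$ three. This rests entirely on recognising the shared building blocks $E_{ij}\hat M E_{kl}$ and $E_{ij}\hat M\hat D_{kl}$ (together with their symmetric, antisymmetric and commutator parts) across the different slots, and then checking that expanding one such matrix in the $\rho$- or $\nu$-basis versus the $\sigma$-basis yields proportional structure constants; this proportionality is what produces the explicit factor $2$. I expect the $(4,4)$-block to demand the most care, since one must confirm that the twisted commutator of two antisymmetric Laplacians stays within the invariant basis and then pin down precisely which family it is linear in.
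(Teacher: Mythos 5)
Your proposal is correct and follows essentially the same route as the paper's proof: block-by-block evaluation of the product \eqref{eqn:AstarBblocks} on basis matrices, organised around the observation that $\rho_{ij}$, $\nu_{ij}$, $\sigma_{ij}$ are all represented by the same elementary Laplacian $E_{ij}$ placed in different slots (your bookkeeping of the normalisation factor $2$, and your derivation of the antisymmetry of $\Delta$ and $\Sigma$ from antisymmetry of the structure matrix, are in fact tidier than the paper's ``up to a factor/sign'' treatment). The one point you deferred---which family $\Sigma$ is linear in---resolves immediately by your own method: since $D^t=-D$, one has $\bigl([D_a,D_b]_{\hat M}\bigr)^t=-[D_a,D_b]_{\hat M}$, so the $\{\delta,\delta\}$ block is again an antisymmetric Laplacian and hence expands in the $\delta$-basis, giving $\Sigma=\Sigma(\delta)$; this agrees with the paper's own proof and its explicit $n=4$ formula, and shows that the ``$\Sigma=\Sigma(\sigma)$'' in the theorem statement is a typo.
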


Note that for $n=2$ the blocks $v$ and $\Sigma$ are absent and the block $\Delta = 0$.
For $n=3$ all blocks are present but still $\Sigma = 0$ since it is a $1\times 1$ block and sits in the diagonal.
\nfootnote{can the structure of $B$ be related to the semi-direct product structure of $G(d)$? Or rather $\sp$?}

\begin{proof}
The following argument can be done with the original blocks $R, P, S, D$ or the reduced blocks 
$\hat R, \hat P, \hat S, \hat D$.
Algebra entries of the type $\{ \rho, \sigma \}$ are found from the corresponding quadratic forms using
\[
\begin{pmatrix}
R_a & 0 \\ 0 & 0 
\end{pmatrix}
*
\begin{pmatrix}
0 & S_b \\ S_b & 0  
\end{pmatrix}
=
\begin{pmatrix}
-2[S_b M R_a]_{sym}  & 0 \\  0 & 0
\end{pmatrix} \,.
\]
The entries are located in the upper left block only, so that $\{ \rho, \sigma \}$ is a linear function 
of $\rho$. We define this matrix valued function to be $L(\rho)$.

Similarly  entries of the type $\{ \nu, \sigma \}$ are found from
\[
\begin{pmatrix}
0 & 0 \\ 0 & P_a 
\end{pmatrix}
*
\begin{pmatrix}
0 & S_b \\ S_b & 0  
\end{pmatrix}
=
\begin{pmatrix}
0 & 0 \\  0 & 2[P_a M S_b]_{sym} 
\end{pmatrix} \,.
\]
Now $[P_a M S_b]_{sym} = [S_b M P_a]_{sym}$ since all three matrices in the product are symmetric. 
Thus for $\{\nu, \sigma\}$ up to a minus sign we find the same linear function $L$ as in $\{\rho,\sigma\}$, 
but since the block is located in the lower right this is $-L(\nu)$.

Algebra entries of the type $\{ \rho, \nu \}$ are found from the corresponding quadratic forms by
\[
\begin{pmatrix}
R_a & 0 \\ 0 & 0 
\end{pmatrix}
*
\begin{pmatrix}
0 & 0 \\ 0 & P_b 
\end{pmatrix}
=
\begin{pmatrix}
0 & -  R_a M P_b \\ . & 0
\end{pmatrix} \,.
\]
The off-diagonal block is then decomposed into symmetric and anti-symmetric part and thus 
gives a linear function of $\sigma$ (the symmetric part off-diagonal block) 
and of $\delta$ (the anti-symmetric part of the off-diagonal block).
Thus we see that up to a factor of two \nfootnote{factor 1/2 or 2?}
again we find the linear function $L$ now evaluated at $\sigma$.
The antisymmetric part is linear in $\delta$ and called $\Delta$.
It is given by $-R_a M P_b + P_b M R_a$.

Up to a factor the same function $\Delta$ appears in $\{ \sigma, \sigma \}$ which is computed from 
\[
\begin{pmatrix}
0 & W_a \\ . & 0 
\end{pmatrix}
*
\begin{pmatrix}
0 & W_b \\ . & 0 
\end{pmatrix}
=
\begin{pmatrix}
0 &  [W_a, W_b]_M \\ . & 0
\end{pmatrix}
\]
where $W_a = S_a$ and $W_b = S_b$, hence $S_a M S_b - S_b M S_a$ which is anti-symmetric, which again gives
$\Delta(\delta)$. \nfootnote{up to a minus sign...}

The same matrix operation is used to compute $\{ \delta, \delta \}$ except that now anti-symmetric Laplacian matrices 
$D_a, D_b$ are used so that $\Sigma(\delta)$ is obtained from $D_a M D_b - D_b M D_a$.

The remaining blocks are $\{ \rho, \delta\}$,  $\{ \nu, \delta\}$ and  $\{ \sigma, \delta\}$, which 
all lead to the same linear function $v$ evaluated at $\rho$, $\nu$, $\sigma$, respectively.
The corresponding matrix products are 
$-2[D_b M R_a]_{sym}$, $2[P_a M D_b]_{sym}$, and $[S_a, D_b]_M$ which all define the same function $v$.
\nfootnote{up to signs and factors...}
\end{proof}

We now give the explicit form of the block $L(\tau)$ where $\tau = \rho$, $\nu$, or $\sigma$.
It is convenient to keep using two indices $ij$ where $i< j\le n$ for the components of the vector $\tau$.
Changing the ordering of the components of $\tau$ gives a permutation of $L$.
We denote the entries of $L$ by $L_{ij, kl}$.

\begin{lemma}\label{lem:Lcomp}
The entries in the matrix valued function $L(\tau)$ are given by 
$L_{ij, ij} = 2 \tau_{ij}/\mu_{ij}$ in the diagonal where $1/\mu_{ij} = 1/m_i + 1/m_j$,
$L_{ij, kl} = 0$ if no two indices in $ij$ and $kl$ coincide, and 
the remaining non-zero entries are
$L_{ij,jl} = (\tau_{ij} + \tau_{jl} - \tau_{il})/m_j$.
\end{lemma}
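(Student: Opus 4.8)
The plan is to read off the entries of $L$ from a single directly computable Poisson bracket. By the proof of Theorem~\ref{thm:PBblocks} the matrix-valued function $L$ is literally the same in every block where it occurs, so its $(ij,kl)$ entry satisfies $L(\rho)_{ij,kl} = \{\rho_{ij},\sigma_{kl}\}$; this is exactly the point that the basis invariants $\rho_{ij}$, $\nu_{ij}$ and $\sigma_{ij}$ all correspond to one and the same rank-one Laplacian matrix $(e_i-e_j)(e_i-e_j)^t$ in the blocks $\hat R$, $\hat P$, $\hat S$. Hence it suffices to evaluate this one bracket explicitly and then substitute the generic argument $\tau$ for $\rho$ at the end.

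First I would compute the bracket directly. Since $\rho_{ij}$ depends only on positions while $\sigma_{kl}$ is bilinear, the modified bracket $\{(\q_a)_s,(\v_b)_t\}=\delta_{ab}\delta_{st}/m_b$ leaves only one term, giving
\[
\{\rho_{ij},\sigma_{kl}\}
 = \sum_{a,s}\frac{2}{m_a}\big((\q_i)_s-(\q_j)_s\big)(\delta_{ai}-\delta_{aj})\big((\q_k)_s-(\q_l)_s\big)(\delta_{ak}-\delta_{al})
 = 2c\,\q_{ij}\cdot\q_{kl},
\]
where the purely mass-dependent coefficient is $c=\sum_a (\delta_{ai}-\delta_{aj})(\delta_{ak}-\delta_{al})/m_a$. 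The spatial sum collapses into the scalar product $\q_{ij}\cdot\q_{kl}$, and everything else sits in $c$.

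Next I would split into the three overlap cases. If $\{i,j\}$ and $\{k,l\}$ are disjoint then $c=0$, which gives $L_{ij,kl}=0$. If the pairs coincide ($kl=ij$) then $c=1/m_i+1/m_j=1/\mu_{ij}$ and $\q_{ij}\cdot\q_{ij}=\rho_{ij}$, producing the diagonal entry $L_{ij,ij}=2\rho_{ij}/\mu_{ij}$. If exactly one index is shared, say $k=j$ so that $kl=jl$, then only $a=j$ survives and $c=-1/m_j$; combining this with the polarization identity $\q_{ij}\cdot\q_{jl}=\tfrac12(\rho_{il}-\rho_{ij}-\rho_{jl})$ (which follows from $\q_{il}=\q_{ij}+\q_{jl}$) yields $L_{ij,jl}=(\rho_{ij}+\rho_{jl}-\rho_{il})/m_j$. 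Invoking the universality of $L$ from Theorem~\ref{thm:PBblocks}, I then replace $\rho$ by $\tau$ throughout to reach the stated formulas, and note that all coefficients are manifestly of degree $-1$ in the masses.

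The step I expect to demand the most care is the index bookkeeping. One must check all four single-overlap patterns ($i=k$, $i=l$, $j=k$, $j=l$) against the ordering convention $i<j$, and verify that the sign emerging from $(\delta_{ai}-\delta_{aj})(\delta_{ak}-\delta_{al})$ combines with the orientation in the polarization identity so that every single-overlap entry reduces, after using $\tau_{ab}=\tau_{ba}$, to the representative form $(\tau_{ij}+\tau_{jl}-\tau_{il})/m_j$. A useful cross-check is to recompute $L(\rho)=-2[\hat S\hat M\hat R]_{sym}$ at the matrix level and confirm that it reproduces exactly these entries in the rank-one Laplacian basis, factor of two included.
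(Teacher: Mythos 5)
Your proposal is correct, and every formula in it checks out: the coefficient $c=\sum_a(\delta_{ai}-\delta_{aj})(\delta_{ak}-\delta_{al})/m_a$ indeed evaluates to $1/\mu_{ij}$, $0$, and $-1/m_j$ in the three overlap cases, and combined with $\q_{ij}\cdot\q_{jl}=\tfrac12(\rho_{il}-\rho_{ij}-\rho_{jl})$ this reproduces exactly the stated entries. Your route differs from the paper's in execution, though not in strategy. The paper never returns to the original phase-space variables: it represents $\rho_{ij}$ and $\sigma_{kl}$ by the rank-one Laplacian matrices $E_{ij}=(e_i-e_j)(e_i-e_j)^t$, computes the algebra product $2[E_{ij}\hat M E_{kl}]_{sym}$ case by case (obtaining $2\mu_{ij}^{-1}E_{ij}$ on the diagonal, $0$ for disjoint index pairs, and $(E_{ij}+E_{jl}-E_{il})/m_j$ for one shared index), and reads off the coefficients $c_{uv}$ by re-expanding in the basis $E_{uv}$. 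Your computation is the dual of this: you evaluate $\{\rho_{ij},\sigma_{kl}\}$ directly with the modified canonical bracket and then invoke polarization, which is precisely the function-level counterpart of the matrix identity $2[E_{ij}\hat M E_{jl}]_{sym}=(E_{ij}+E_{jl}-E_{il})/m_j$. Both proofs lean in the same way on Theorem~\ref{thm:PBblocks} to transfer the computation of the single block $\{\rho,\sigma\}$ to the generic argument $\tau$, so nothing is lost there. What your version buys is independence from the conventions tying invariants to matrix representatives (factors of $2$ and the sign discrepancy between $-2[S_bMR_a]_{sym}$ in the theorem's proof and $2[\hat R\hat M\hat S]_{sym}$ in the lemma's proof, which the paper itself treats loosely); in particular your sign agrees with the explicitly computed two-body bracket $\{\rho_{12},\sigma_{12}\}=2\rho_{12}/\mu_{12}$. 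What the paper's matrix-level version buys is that the identical mechanics extend verbatim to the remaining blocks $\Delta$, $v$, $\Sigma$, where antisymmetric Laplacian representatives appear and a direct bracket computation would be more tedious.
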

\begin{proof}
A basis for symmetric Laplacian matrices is given by $E_{ij}$ for $i< j< n$ where the 
$ii$ and the $jj$ entry are $+1$, the $ij$ and $ji$ entry are $-1$ and all other entries are zero.

We already showed that $L$ is found three times in $B$, we choose to compute its components 
from the $\{\rho, \sigma\}$ block.
To compute $\{\rho_{ij}, \sigma_{kl} \}$ find the symmetric Laplacian matrices 
$\hat R_{ij}$ and $\hat S_{kl}$ corresponding to $\rho_{ij}$ and $\sigma_{kl}$, respectively. 
Then compute $ 2 [ \hat R_{ij} \hat M \hat S_{kl} ]_{sym} $ and express this as a linear 
combination of symmetric Laplacian matrices $\hat S_{uv}$ with coefficients $c_{uv}$. Then we have found that 
$\{ \rho_{ij}, \sigma_{kl} \} = \sum_{uv} \sigma_{uv} c_{uv}$. 
Of course the coefficients $c_{uv}$ depend on ${ij, kl}$ but this is suppressed in the notation. 
Each entry of the block $\{ \rho, \sigma \} $ is computed in this way, and all together
this defines $L(\sigma)$.

First consider the diagonal of $L$ with $ij = kl$, so that $\hat R_{ij} = \hat S_{kl} = E_{ij}$. Hence the diagonal entries of $L$ are of the form 
$2 [ E_{ij} \hat M E_{ij}]_{sym}  = 2 E_{ij} \hat M E_{ij} = 2 \mu_{ij}^{-1} E_{ij}$, so that in this case the single 
non-zero $c_{uv}$ is $c_{ij} = 2 / \mu_{ij}$, so that the diagonal entries of $L(\sigma)$ are $\sigma_{ij}/\mu_{ij}$.
Next consider the case that the pairs of indices $ij$ and $kl$ have no index in common. 
Then it is easy to see that $E_{ij} M E_{kl} = 0$, so the corresponding entry of $L$ vanishes.
Finally consider the case in which there is exactly one index in common between $ij$ and $kl$,
say $j=k$. By symmetry $E_{ij} = E_{ji}$ the other cases with an index in common can be reduced to this case.
Then $2[ E_{ij} M E_{jl}]_{sym} = (E_{ij} + E_{jl} - E_{il})/m_j$. This gives the off-diagonal 
non-zero entries of $L$.

\end{proof}

Note that the $L(\tau)$ block is independent of the choice of basis for $\delta$. 
The explicit form of the further blocks $\Delta$, $v$ and $\Sigma$ depends on the choice of basis for $\delta$
and we give explicit formulae for $n=3$ and $n=4$ in the following subsections.

Once a basis for the antisymmetric Laplacian matrices is chosen the computation of the 
components of $\Delta, v, \Sigma$ proceeds in a way similar to Lemma~\ref{lem:Lcomp}.
The building blocks for such a basis are quadratic forms 
$C_{ij, kl} = \q_{ij} \cdot \v_{kl} - \v_{ij} \cdot \q_{kl} $  with $k=j$ with corresponding 
antisymmetric Laplacian matrix $\hat D = E_{ij} E_{jl} - E_{jl} E_{ij}$.
However, except for $n=3$ there are too many such matrices to form a basis.
For example the entry $\Delta_{ij,kl}$ is given by  the antisymmetric part of the product 
$E_{ij} \hat M E_{kl}$. As before the results is zero if no indices are in common. 
In addition from anti-symmetry the result is also zero if both double-indices are the same.
Hence the only non-zero entries are $E_{ij} \hat M E_{jl} - E_{jl} \hat M E_{ij}$
which now need to be expressed in terms of the basis of antisymmetric Laplacian matrices.

\subsection*{Three bodies}

For $n=3$ the 10 invariants are 
$\rho = ( ||\q_{23}||^2, ||\q_{13}||^2, ||\q_{12}||^2)^t$, 
$\nu =( ||\v_{23}||^2, ||\v_{13}||^2, ||\v_{12}||^2)^t$, 
$\sigma =( \q_{23} \cdot \v_{23} ,  \q_{13} \cdot \v_{13}, \q_{12} \cdot \v_{12})^t$, 
and the single entry $\delta =  (\q_{23} \cdot \v_{31} -  \v_{23} \cdot  \q_{31} )$.
The corresponding antisymmetric Laplacian matrix $\hat D$ of the quadratic 
form $\delta$ is the unique (up to scaling) antisymmetric Laplacian matrix 
in dimension three.
The Poisson structure matrix is of the general form described in Theorem~\ref{thm:PBblocks}
where $\Sigma = 0$.
%
The building blocks of the Poisson structure matrix are
\[
   \Delta(\delta) =  \delta \begin{pmatrix} 0 & 1/m_3 &  -1/m_2 \\ . & 0 & 1/m_1 \\  . &  . & 0  \end{pmatrix}
\]
where $\Delta^t = -\Delta$ and
\[
     L( \tau_{23},\tau_{13},\tau_{12} ) = \begin{pmatrix} 
     2 \tau_{23}/\mu_{23} & (\tau_{23} + \tau_{13} - \tau_{12})/m_3 & (\phantom{-}\tau_{23} - \tau_{13} + \tau_{12})/m_2 
     \\ . & 2 \tau_{13}/\mu_{13} & (-\tau_{23} + \tau_{13} + \tau_{12})/m_1 \\ 
   . & . & 2 \tau_{12} / \mu_{12}  \end{pmatrix} \,,
\]
where $1/\mu_{ij} = 1/m_i + 1/m_j$ and $L^t = L$. The column vector $v$ is
\[
   v( \tau_{23}, \tau_{13}, \tau_{12} ) = \begin{pmatrix}
 {{(\phantom{-}\tau_{23}+\tau_{13}-\tau_{12})}/{m_2}-(\phantom{-}\tau_{23}-\tau_{13}+\tau_{12})}/{m_3} \\
 {{(-\tau_{23}+\tau_{13}+\tau_{12})}/{m_3}-(\phantom{-}\tau_{23}+\tau_{13}-\tau_{12})}/{m_1} \\
 {{(\phantom{-}\tau_{23}-\tau_{13}+\tau_{12})}/{m_1}-(-\tau_{23}+\tau_{13}+\tau_{12})}/{m_2}
\end{pmatrix} \,.
\]

The bracket has two Casimirs. One Casimir is the
determinant of the Gram matrix of the vectors 
$(\q_{23}, \q_{13}, \v_{23}, \v_{13})$.
In terms of invariants the symmetric Gram matrix is
\[
  2 G = \begin{pmatrix} 2 \rho_{23} & \rho_{12} - \rho_{13} - \rho_{23} & 
  		2 \sigma_{23} & \delta  + \sigma_{12} - \sigma_{13} - \sigma_{23} \\
  . & 2 \rho_{13} & -\delta +  \sigma_{12} - \sigma_{13} - \sigma_{23} & 2 \sigma_{13} \\
  . & . & 2 \nu_{23} & \nu_{12} - \nu_{13} - \nu_{23} \\ 
  . & . & . & 2 \nu_{13}  \end{pmatrix} \,.
\]
A different choice of basis for the difference vectors, e.g.\ $(\q_{12}, \q_{31}, \v_{12}, \v_{23})$,
gives a different Gram matrix. However, the determinant of this matrix in terms of the invariants is the same.
%
The Gram determinant is homogeneous of degree 4 in the invariants.
The surface $\det G = 0$ is an example of a (linear) determinantal variety \cite{Harris92}.
For $d=3$ we necessarily have $\det G = 0$ since the 4-volume spanned 3-vectors vanishes.
Hence for $d=3$ the condition $\det G = 0$ is a relation between the quadratic invariants.

The other Casimir is the total angular momentum $|| {\bf L}_c||^2$ with respect to the centre of mass
which for $n=3$ can be written in terms of the invariants as
\[
  ||{\bf L}_c||^2 = \sum_{i<j} \frac{m_i^2 m_j^2}{\mtot^2} ( \rho_{ij} \nu_{ij} - \sigma_{ij}^2)
  + \frac{ m_1 m_2 m_3}{2 \mtot} \left( \delta_{12}^2+ \sum_{i<j} \frac{m_k}{\mtot} (( \rho_s -2 \rho_{ij}) (\nu_s - 2\nu_{ij}) - (\sigma_s - 2\sigma_{ij})^2) \right)
\]
where $\rho_s = \sum_{i<j} \rho_{ij}$, 
$\nu_s = \sum_{i<j} \nu_{ij}$,
$\sigma_s = \sum_{i<j} \sigma_{ij}$.
This Casimir is homogeneous quadratic in the invariants and hence defines a quadric, 
which is non-singular  whenever the value of $||{\bf L}_c||^2$ is positive.

\subsection*{Four bodies} 

For $n=4$ there are 21 invariants. 
The invariants are ordered as $(\tau_{12}, \tau_{13}, \tau_{14}, \tau_{23}, \tau_{24}, \tau_{34})$ for 
$\tau = \rho, \nu, \sigma$. 
We choose $\delta = ( C_{12,43}, C_{23, 41}, C_{24, 31})$ where $C_{ij, kl} = 
\q_{ij} \cdot \v_{kl} - \v_{ij} \cdot \q_{kl} $ as a basis for the antisymmetric Laplacian invariant quadratic forms.
The $6\times 6$-matrix valued function $L(\tau_{ij})$ is given by 
\[
\left(
\begin{array}{cccccc}
 \frac{2 \tau _{1,2}}{\mu _{1,2}} & \frac{\tau _{1,2}+\tau _{1,3}-\tau _{2,3}}{m_1} & \frac{\tau _{1,2}+\tau _{1,4}-\tau
   _{2,4}}{m_1} & \frac{\tau _{1,2}-\tau _{1,3}+\tau _{2,3}}{m_2} & \frac{\tau _{1,2}-\tau _{1,4}+\tau _{2,4}}{m_2} & 0 \\
 . 
 & \frac{2 \tau _{1,3}}{\mu _{1,3}} & \frac{\tau _{1,3}+\tau _{1,4}-\tau
   _{3,4}}{m_1} & \frac{-\tau _{1,2}+\tau _{1,3}+\tau _{2,3}}{m_3} & 0 & \frac{\tau _{1,3}-\tau _{1,4}+\tau _{3,4}}{m_3}
   \\
. & . & \frac{2 \tau_{1,4}}{\mu _{1,4}} & 0 & \frac{-\tau _{1,2}+\tau _{1,4}+\tau _{2,4}}{m_4} & \frac{-\tau _{1,3}+\tau _{1,4}+\tau
   _{3,4}}{m_4} \\
. & . & 0 & \frac{2 \tau_{2,3}}{\mu _{2,3}} & \frac{\tau _{2,3}+\tau _{2,4}-\tau _{3,4}}{m_2} & \frac{\tau _{2,3}-\tau _{2,4}+\tau _{3,4}}{m_3}
   \\
. & 0 &
 . & .  & \frac{2 \tau _{2,4}}{\mu _{2,4}} & \frac{-\tau _{2,3}+\tau _{2,4}+\tau_{3,4}}{m_4} \\
 0 & 
. & . & . & .  & \frac{2 \tau _{3,4}}{\mu_{3,4}}
\end{array}
\right) \,.
\]
The $6\times 3$-matrix valued function $v(\tau_{ij})$ is given by
\[
 \left(
\begin{array}{ccc}
 -\frac{\tau _{1,3}-\tau _{1,4}-\tau _{2,3}+\tau _{2,4}}{\mu _{1,2}} & \frac{\tau _{1,2}-\tau _{1,3}+\tau
   _{2,3}}{m_1}+\frac{-\tau _{1,2}-\tau _{1,4}+\tau _{2,4}}{m_2} & \frac{-\tau _{1,2}-\tau _{1,3}+\tau
   _{2,3}}{m_2}+\frac{\tau _{1,2}-\tau _{1,4}+\tau _{2,4}}{m_1} \\
 \frac{\tau _{1,2}+\tau _{1,3}-\tau _{2,3}}{m_3}+\frac{-\tau _{1,3}+\tau _{1,4}-\tau _{3,4}}{m_1} & \frac{\tau _{1,2}-\tau
   _{1,3}-\tau _{2,3}}{m_1}+\frac{\tau _{1,3}+\tau _{1,4}-\tau _{3,4}}{m_3} & \frac{\tau _{1,2}-\tau _{1,4}-\tau
   _{2,3}+\tau _{3,4}}{\mu _{1,3}} \\
 \frac{-\tau _{1,2}-\tau _{1,4}+\tau _{2,4}}{m_4}+\frac{-\tau _{1,3}+\tau _{1,4}+\tau _{3,4}}{m_1} & \frac{\tau
   _{1,2}-\tau _{1,3}-\tau _{2,4}+\tau _{3,4}}{\mu _{1,4}} & \frac{\tau _{1,2}-\tau _{1,4}-\tau _{2,4}}{m_1}+\frac{\tau
   _{1,3}+\tau _{1,4}-\tau _{3,4}}{m_4} \\
 \frac{-\tau _{1,2}+\tau _{1,3}-\tau _{2,3}}{m_3}+\frac{\tau _{2,3}-\tau _{2,4}+\tau _{3,4}}{m_2} & -\frac{\tau
   _{1,2}-\tau _{1,3}-\tau _{2,4}+\tau _{3,4}}{\mu _{2,3}} & \frac{-\tau _{1,2}+\tau _{1,3}+\tau _{2,3}}{m_2}+\frac{-\tau
   _{2,3}-\tau _{2,4}+\tau _{3,4}}{m_3} \\
 \frac{\tau _{1,2}-\tau _{1,4}+\tau _{2,4}}{m_4}+\frac{\tau _{2,3}-\tau _{2,4}-\tau _{3,4}}{m_2} & \frac{-\tau _{1,2}+\tau
   _{1,4}+\tau _{2,4}}{m_2}+\frac{-\tau _{2,3}-\tau _{2,4}+\tau _{3,4}}{m_4} & -\frac{\tau _{1,2}-\tau _{1,4}-\tau
   _{2,3}+\tau _{3,4}}{\mu _{2,4}} \\
 \frac{\tau _{1,3}-\tau _{1,4}-\tau _{2,3}+\tau _{2,4}}{\mu _{3,4}} & \frac{\tau _{1,3}-\tau _{1,4}-\tau
   _{3,4}}{m_3}+\frac{\tau _{2,3}-\tau _{2,4}+\tau _{3,4}}{m_4} & \frac{-\tau _{1,3}+\tau _{1,4}-\tau
   _{3,4}}{m_4}+\frac{-\tau _{2,3}+\tau _{2,4}+\tau _{3,4}}{m_3}
\end{array}
\right) \,.
\]
The $6\times 6$-matrix valued function $\Delta( \delta)$ is given by
\[
\left(
\begin{array}{cccccc}
 0 & -\frac{\delta _1+\delta _2+\delta _3}{2 m_1} & -\frac{-\delta _1+\delta _2+\delta _3}{2 m_1} & \frac{\delta _1+\delta
   _2+\delta _3}{2 m_2} & \frac{-\delta _1+\delta _2+\delta _3}{2 m_2} & 0 \\
 . & 0 & \frac{\delta _1+\delta _2-\delta _3}{2 m_1} & -\frac{\delta _1+\delta
   _2+\delta _3}{2 m_3} & 0 & -\frac{\delta _1+\delta _2-\delta _3}{2 m_3} \\
 . & . & 0 & 0 & -\frac{-\delta_1+\delta _2+\delta _3}{2 m_4} & \frac{\delta _1+\delta _2-\delta _3}{2 m_4} \\
. & .  & 0 & 0 & -\frac{\delta_1-\delta _2+\delta _3}{2 m_2} & \frac{\delta _1-\delta _2+\delta _3}{2 m_3} \\
  . & 0 & 
  . & . &  0 & -\frac{\delta _1-\delta _2+\delta _3}{2 m_4} \\
 0 & 
  . & . & . & . & 0
\end{array}
\right) \,.
\]
Finally the $3\times 3$-matrix valued function $\Sigma(\delta)$  is given by
\[
 \left(
\begin{array}{ccc}
 0 & 
   \frac{\delta _1-\delta _2+\delta _3}{2 m_1}
 -\frac{\delta _1+\delta _2-\delta _3}{2 m_2}
 +\frac{-\delta _1+\delta_2+\delta _3}{2 m_3}
 +\frac{\delta _1+\delta _2+\delta _3}{2 m_4} 
 & 
   \frac{\delta _1-\delta_2+\delta _3}{2 m_1} 
 -\frac{\delta _1+\delta _2-\delta _3}{2 m_2}
 -\frac{-\delta _1+\delta _2+\delta _3}{2 m_3}
 -\frac{\delta _1+\delta _2+\delta _3}{2 m_4}
 \\
  .  
 &  0 & 
   \frac{\delta _1-\delta _2+\delta _3}{2 m_1}
 +\frac{\delta _1+\delta _2-\delta _3}{2 m_2}
 -\frac{-\delta _1+\delta _2+\delta _3}{2 m_3}
 +\frac{\delta _1+\delta_2+\delta _3}{2 m_4}
  \\
.  
& .  
   & 0
\end{array}
\right) \,.
\]
The rank of this Poisson structure matrix is 18, so there are 3 Casimirs:
the total angular momentum, the Gram determinant of the $6 \times 6$ Gram matrix 
and a third Casimir which can be obtained from minors of the Gram determinant.
\nfootnote{For $d=3$ the fully reduced system has dimension 14 only, so there must be 4 additional conserved 
quantities, probably related to the minors of $G$, since the $6\times 6$ Gram matrix will have 
a kernel of dimension 3 for spatial 3D motion. Hence all the $5\times 5$ and all $4\times 4$ minors 
will vanish. Also for $d=4$ the $5\times 5$ minors vanish, while for $d=5$ only $\det G = 0$ and only 
for $d \ge 6$ we may have $\det G > 0$.}
\nfootnote{did we compute this Casimir somewhere? Did not Albouy give a formula?}

\section{Poisson Integrator}

A Poisson map $\phi : M \to N$ satisfies
\[
    \{ f \circ \phi, g \circ \phi \}_M = \{ f, g \}_N \circ \phi \,.
\]
Let $B$ be the structure matrix of the Poisson structure, and let $\phi$ be a map
from $M$ to itself, as it arrises when $\phi$ is given by the flow of Poisson differential equations.
Linearising the definition in this case gives
\[
    D\phi(x) B(x) D\phi(x)^t = B(\phi(x)) \,.
\]
An integrator with this property preserves the geometric structure of the flow. 
A symplectic map is a special case for which $B(x) = J^{-1}$ is a constant even dimensional antisymmetric matrix.
 

We treat the case $n=2$ first before treating $n=3,4$.
A splitting integrator uses the kinetic and the potential energy separately as 
generators of flows.
The vectorfield $X_K$ generated by $K_c$ is
\[
X_K = 
   \begin{pmatrix}
         \dot \rho_{12} \\ \dot \nu_{12} \\ \dot \sigma_{12} 
   \end{pmatrix}
=
\frac{2}{\mu} \begin{pmatrix}
      0 & 2 \sigma_{12} & \rho_{12} \\
      -2 \sigma_{12} & 0 & - \nu_{12} \\
      -\rho_{12} & \nu_{12} & 0 
   \end{pmatrix} 
   \begin{pmatrix}
         0 \\ \mu/2 \\ 0 
   \end{pmatrix}
   = 
   \begin{pmatrix}
          2 \sigma_{12} \\ 0 \\ \nu_{12} 
   \end{pmatrix}   \,.
\]
Thus $\nu_{12}$ is constant, hence $\sigma_{12}$ is linear in time, 
and hence $\rho_{12}$ is quadratic in time. The solution is 
\[
   \begin{pmatrix}
         \rho_{12}(t) \\ \nu_{12}(t) \\ \sigma_{12}(t)
   \end{pmatrix}
   =
   \begin{pmatrix}
         \rho_{12}(0) +2 t \sigma_{12}(0) + t^2  \nu_{12}(0)  \\ \nu_{12}(0) \\ \sigma_{12}(0) + t \nu_{12}(0)
   \end{pmatrix}
\]
Similarly the vector field $X_V$ generated by $V$ is given by
\[
X_V =  
   \begin{pmatrix}
         \dot \rho_{12} \\ \dot \nu_{12} \\ \dot \sigma_{12} 
   \end{pmatrix}
=
\frac{2}{\mu} \begin{pmatrix}
      0 & 2 \sigma_{12} & \rho_{12} \\
      -2 \sigma_{12} & 0 & - \nu_{12} \\
      -\rho_{12} & \nu_{12} & 0 
   \end{pmatrix} 
   \begin{pmatrix}
         V'(\rho_{12}) \\ 0 \\ 0 
   \end{pmatrix}
   = 
   \frac{2}{\mu} \begin{pmatrix}
          0 \\ -2 \sigma_{12} V'(\rho_{12})  \\ - \rho_{12} V'(\rho_{12}) 
   \end{pmatrix}   \,.
\]
Thus $\rho_{12}$ is constant, hence $\sigma_{12}$ is linear in time, and hence $\nu_{12}$ is quadratic in time.
The solution is
\[
   \begin{pmatrix}
         \rho_{12}(t) \\ \nu_{12}(t) \\ \sigma_{12}(t)
   \end{pmatrix}
   =
   \begin{pmatrix}
        \rho_{12}(0) \\ \nu_{12}(0) - \frac{4}{\mu} t \sigma_{12}(0)V'(\rho_{12}(0)) + \frac{8}{\mu^2} t^2 \rho_{12} V'(\rho_{12}(0))^2 \\ 
        \sigma_{12}(0) -\frac{2}{\mu} t \rho_{12} V'(\rho_{12})
   \end{pmatrix} \,.
\]

We now treat the case of $n > 2$.
For a splitting integrator it is crucial that the part of the Hamiltonian $K_c(\nu)$ and $V(\rho)$ produce integrable flows. 
Write $Y = (\rho, \nu, \sigma, \delta)$. 
Recall that $K_c(\nu)$ is linear in $\nu$ and independent of the other variables. 
Hence $K'_c(\nu)$ is a constant vector. 
The vector field generated by $K_c(\nu)$ is given by 
\[
  \dot Y = X_K = B \cdot \nabla K_c = \begin{pmatrix} 
     2(L(\sigma) + \Delta) K'_c \\
     0 \\
     L(\nu) K'_c \\
     -2v(\nu)^t K'_c
  \end{pmatrix} \,.
\]
Therefore $\nu$ is constant, and hence the derivative of $\sigma$ and $\delta$ is constant, 
so that they integrate to linear functions of $t$.
The remaining equations for $\rho$ thus have a linear function of $t$ 
on the right hand side, and integrate to quadratic functions of $t$.

Using the particular form of $K_c(\nu)$ brings additional simplification. 
Recall that 
$
   K_c(\nu) = \frac{1}{2\mtot} \sum m_i m_j \nu_{ij} \,.
$
Using the formulas from the previous section for $n=3,4$ the constant vector $K'_c$ satisfies the following identities:
\[
    L(\tau) K'_c = \tau, \quad K'_c v(\tau)  = 0, \quad \tau = \rho, \nu, \sigma, \qquad \Delta K'_c = 0 \,.
\] 
As a result the vector field $X_K$ simply becomes 
\[
  \dot \rho = 2 \sigma, \dot \nu = 0, \dot \sigma = \nu, \dot \delta = 0,
\]
which recovers Newton's first law in our coordinate system.
%
%
The explicit solution  
is the Poisson map
\[
  \phi_K^t ( \rho, \nu, \sigma, \delta) = (\rho + 2 t \sigma + t^2 \nu, \nu, \sigma + t \nu, \delta) \,.
\]
Notice that $\phi_K^t$ is linear in the phase space variables. 
\rem{
Hence the Jacobian matrix of $\phi_K^t$ is independent of the initial conditions and for $n=3$ it is given by 
\[
D\phi^t_K = \frac{ \partial \phi_K^t( \rho, \nu, \sigma, \delta) } {\partial (\rho, \nu, \sigma, \delta) } = 
\left(
\begin{array}{cccccccccc}
 1 & 0 & 0 & t^2 & 0 & 0 & 2 t & 0 & 0 & 0 \\
 0 & 1 & 0 & 0 & t^2 & 0 & 0 & 2 t & 0 & 0 \\
 0 & 0 & 1 & 0 & 0 & t^2 & 0 & 0 & 2 t & 0 \\
 0 & 0 & 0 & 1 & 0 & 0 & 0 & 0 & 0 & 0 \\
 0 & 0 & 0 & 0 & 1 & 0 & 0 & 0 & 0 & 0 \\
 0 & 0 & 0 & 0 & 0 & 1 & 0 & 0 & 0 & 0 \\
 0 & 0 & 0 & t & 0 & 0 & 1 & 0 & 0 & 0 \\
 0 & 0 & 0 & 0 & t & 0 & 0 & 1 & 0 & 0 \\
 0 & 0 & 0 & 0 & 0 & t & 0 & 0 & 1 & 0 \\
 0 & 0 & 0 & 0 & 0 & 0 & 0 & 0 & 0 & 1
\end{array}
\right) \,.
\]
}

Using the formulas from the previous section for $n=3,4$ the vector field of the potential $V(\rho)$  is given by
\[
  \dot Y = X_V = B \cdot \nabla V = \begin{pmatrix} 
     0 \\
     -2(L(\sigma) + \Delta) V' \\
     -L(\rho) V' \\
     -2 v(\rho)^t V'
  \end{pmatrix} \,.
\]
Therefore $\rho$ is constant, and thus the vector $V'(\rho)$ is constant as well.
Hence the derivative of $\sigma$ and $\delta$ is also constant, 
and they integrate to linear functions of time.
The remaining equations for $\nu$ thus have a linear function of $t$
on the right hand side, and integrate to quadratic functions of $t$.
The explicit solution is the Poisson map
\[
  \phi_V^t ( \rho, \nu, \sigma, \delta) = (\rho, \nu -  2 t(L(\sigma) + \Delta(\delta))V'(\rho) - t^2(L(a) + \Delta(b))V'(\rho), \sigma + t a  , \delta + t b) \,.
\]
where $a = - L(\rho) V'(\rho) $ and $b = - v(\rho)^t V'(\rho)$.
Using linearity the solution for $\nu$ can be rewritten as
\[
 \nu(t) = \nu - ( L( 2 t \sigma +  t^2 a) + \Delta( 2 t \delta  + t^2 b)) V'(\rho) \,.
\]
The map $\phi_V^t$ is linear in the initial conditions $\nu, \sigma, \delta$, but non-linear in $\rho$, 
unlike $\phi_K^t$, which is linear in all initial conditions.

Both combinations $\phi_V^t \phi_K^t$ and $\phi_K^t \phi_V^t$ are first order Poisson integrators
for the reduced $n$-body problem. However, neither of them is reversible.
Combining the two first order integrators so that the more expensive step $\phi_V^t$ is only 
used once gives a 2nd order integrator
\[
 \Phi^t = \phi_K^{t/2} \circ \phi_V^t \circ \phi_K^{t/2} \,.
\]
By construction the integrator is Poisson and exactly preserves the two Casimirs.
In addition the integrator $\Phi^t$ is reversible, i.e.\ it satisfies $\Phi^t \circ \Phi^{-t} = id$.
This follows from the fact that each individual map $\phi_K^t$ and $\phi_V^t$ is a flow, 
and hence satisfies the flow property $\phi_V^t \circ \phi_V^s = \phi_V^{s+t}$.
A proof that $\Phi$ is a second order integrator can be found in \cite{HaLuWa02}, 
this follows in general for splitting methods.
Moreover, from the building blocks $\phi_K$ and $\phi_V$ also higher order
integrators can be constructed, see \cite{Yoshida90,HaLuWa02} and the references therein.

A fundamental property of the {\em gravitational} $n$-body problem is preserved by this integrator, which 
is the scaling invariance. The Hamiltonian and equations of motion are unchanged when 
time is scaled by $\tau$ and space is scaled by $\lambda$ such that $\lambda^3 = \tau^2$.
The induced scaling of the invariants is $S^\lambda(\rho,\nu, \sigma, \delta) = ( \lambda^2 \rho, \lambda^{-1} \nu, \sqrt{\lambda}\sigma , \sqrt{\lambda} \delta  )$. Now it is easy to check that for both, $\phi_K$ and $\phi_V$, and hence for $\Phi$ we have
\begin{equation} \label{eqn:scaleh}
     \Phi^{t \lambda^{3/2}} \circ S^\lambda = S^\lambda \circ \Phi^t,
\end{equation}
and the Hamiltonian is scaled by $\lambda^{-1}$ (as is $\nu$) when the stepsize is scaled by $\lambda^{3/2}$.
Other homogeneous potentials have similar scaling laws.
Because of this scaling symmetry an orbit of $\Phi^h$ can be mapped to a scaled orbit
with scaled stepsize.
The analogous property in the flow is that periodic orbits appear in families parametrized by the energy, 
and in the gravitational case these families are obtained from the scaling symmetry.
Since the Hamiltonian is not conserved for the map $\Phi^h$ the natural family parameter in the discrete case 
is the stepsize. 
 
\section{Numerical Example: Figure 8 in 18 steps}

\begin{figure}
\centerline{\includegraphics[width=8cm]{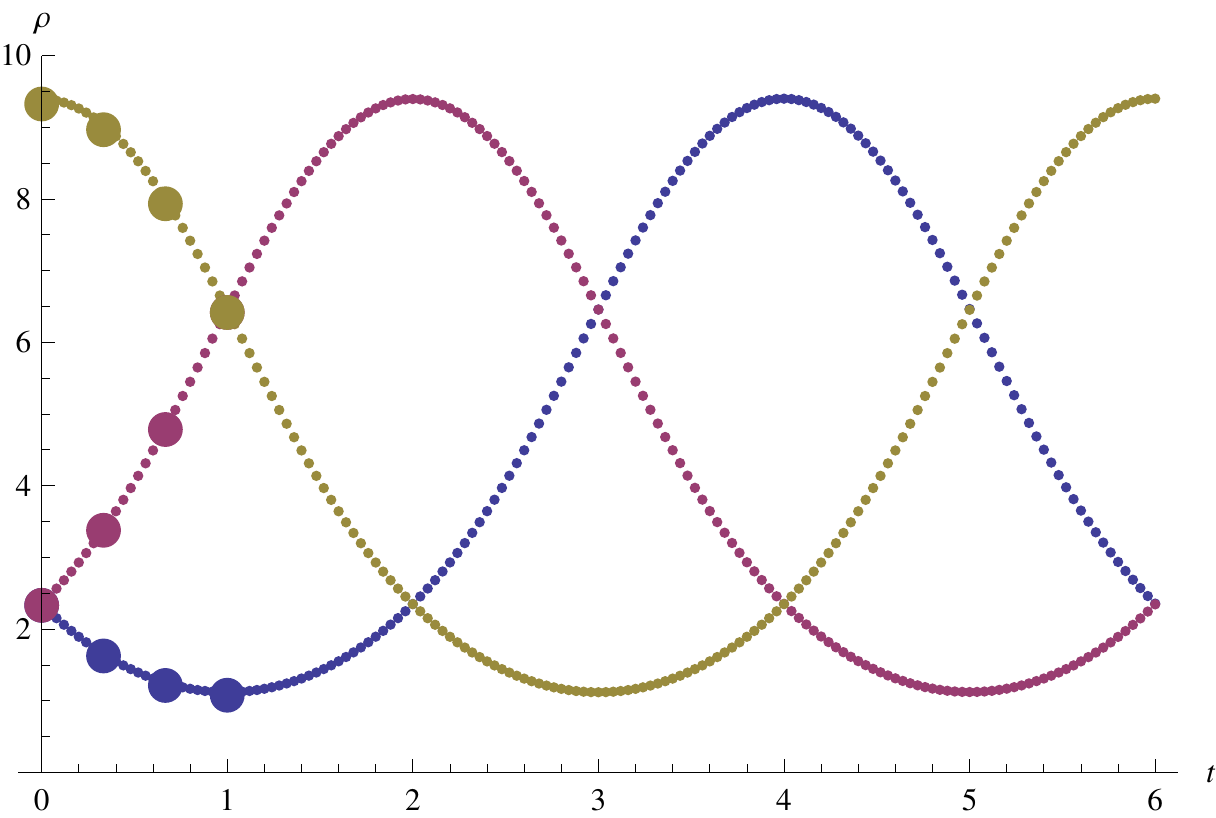}}
\caption{Evolution of $\rho_{12}, \rho_{13}, \rho_{23}$ for the figure 8 discretised with 150 steps of stepsize $h=0.04$ 
(small dots).
Overlaid are the first 3 iterates (big dots) for the figure 8 in 18 steps, starting a the isosceles collinearity and finishing at the non-collinear isosceles configuration. 
This period 18 orbit of $\Phi^h$ with $h=1/3$ 
has the same discrete symmetry and linear stability as the figure 8.
The remaining points of the period 18 orbit can be generated from the discrete symmetry.
Initial conditions for both orbits are chosen so that $T = nh = 6$, which makes their size surprisingly similar.}
\end{figure}

Since a Poisson integrator preserves the Casimirs and the geometric structure of the problem 
exactly, it may be sensible to consider unusually large time steps $h$, and still get qualitatively 
correct results. We use this kind of ultra-discretisation to show that there are periodic orbits
of period 18 of the map $\Phi^h$ that have the same discrete symmetry and the same stability
as the  figure 8 choreography of Chenciner and Montgomery \cite{CM00}.

A numerical integrator is a map with the time step $h$ as a continuous parameter.
Usually $h$ is chosen sufficiently small so that  no essential 
change occurs when $h$ is changed. From the scaling relation \eqref{eqn:scaleh}
we see that in our particular problem orbits of the integrator appear in families.
This reflects the well known scaling symmetry of the $n$-body problem 
with homogeneous potential. Even without such a scaling relation periodic orbits 
of a Hamiltonian flow appear in families locally parametrized by the period or by the value of the Hamiltonian.
Since the integrator in general does not conserve the Hamiltonian the only 
available parameter is the stepsize, and we may expect that locally (i.e.\ ignoring bifurcations) 
periodic orbits appear in families parametrized by the stepsize. 
In the particular case of the gravitational $n$-body problem the integrator
even has a global scaling property given above.

When using a Poincar\'e section to find periodic orbits the energy is fixed and the (continuous time) period 
of the periodic orbit is undetermined. When considering the integrator $\Phi^h$ as a discrete 
dynamical system there is no sense in fixing the energy since it is not conserved.
Instead we look for (discrete) period $n$ orbits for fixed step size $h$ of the integrator 
(and hence fixed period $T=nh$ in the continuum limit). Because of the scaling symmetry 
the step size can be arbitrarily fixed. 

The figure 8 choreography can be discretised with 18 steps only, see Figure~1.
Period 18 amounts to only 3 iterates after factoring out the discrete $D_6/\mathbb{Z}_2$ symmetry (see \cite{CM00}) 
in reduced space.
With such a huge stepsize relative to the period $T=6$ it is not obvious how to identify the discretised figure 8 orbit.
The precise claim is that there exits a period 18 orbit of $\Phi^h$ that has the same discrete symmetries as the Figure 8, 
is  linearly stable, and roughly follows the shape of the figure 8. For comparison a figure 8 discretised with 150 
steps is also shown in Figure~1.
Periodic orbits of $\Phi^h$ with period 6 or 12 with the correct symmetry do exist, but they are not elliptic.
Starting with the collinear configuration, the discrete symmetry forces the initial condition to be of the form 
$\rho_{13}  = \rho_{23}, \nu_{12} = 0, \nu_{13} = \nu_{23}, \sigma_{12} = 0, \sigma_{13} = -\sigma_{23}$, 
and imposing $\det G = ||\mathbf{L}_c||^2 = 0$ in addition gives
$\rho_{12} = 4\rho_{13}$ and $\delta_{12} = 2\sigma_{13}$ so that there are only 3 parameters for orbits with this symmetry. 
Trying to find a symmetric period $6m$ solution requires that the $m$th iterate of this initial condition is 
at the isosceles configuration of the form $\rho_{12} = \rho_{13}$, $\nu_{12} = \nu_{13}$, $\sigma_{12} = - \sigma_{13}$, 
$\sigma_{23} = 0$. 

\begin{figure}
\centerline{\includegraphics[width=16cm]{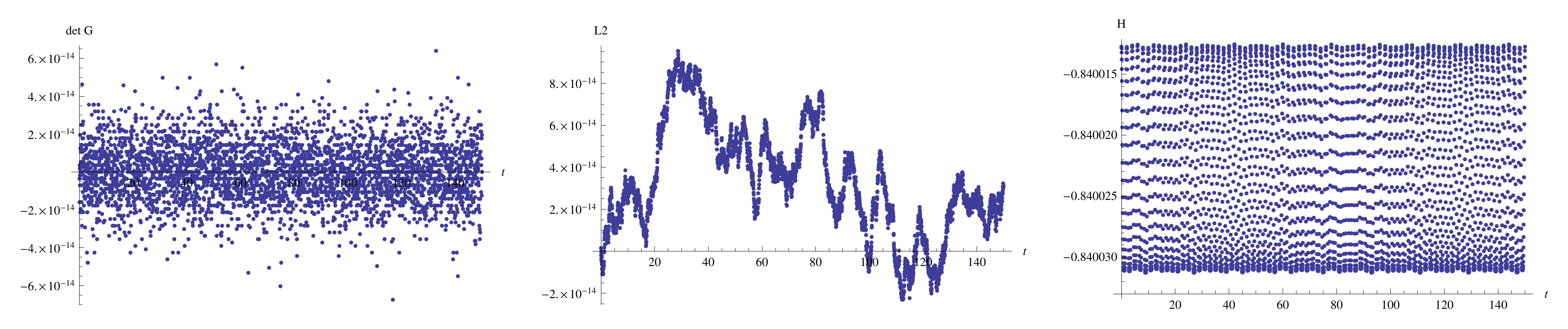}}
\caption{Evolution of $\det G$, $|| \mathbf{L}_c||^2$, and $H$ for 25 rounds of the figure 8 with 150 steps each of stepsize $h=0.04$.
The typical behaviour of Poisson integrators is found that preserves the Casimirs essentially to machine precession
while the Hamiltonian has larger fluctuations but no drift.}
\end{figure}

The period $18$ orbit of $\Phi^h$ is given by $\rho_{13} = 2.33107$, $\nu_{13} =  2.35105$, and $\sigma_{13} = 1.28227$
for $h = 1/3$, $T=6$. 
The period $6 m$ orbit of $\Phi^h$ with stepsize $h=1/m$ converges to a phase space point on the figure 8 orbit 
of the continuous system with period $T=6$ and coordinates $\rho_{13} = 2.34791$, $\nu_{13} =2.3746$, $\sigma_{13} = 1.28904$. 
The corresponding value of the Hamiltonian is $H = -0.84$. 
This orbit for $m= 25$ is shown in Figure~1. 
Note that in full space this corresponds to only half of the figure 8.
In order to illustrate the properties of the Poisson integrator 
Figure~2 shows the Casimirs and the Energy over 25 rounds of the figure 8.

Instead of scaling the stepsize with $m$ we can also fix it at say $h = 1$ 
and scale the initial conditions with $m$.
Then we obtain the statement that the map $\Phi^1$ has a family of periodic orbits of period $6m$ 
for which $\rho_{13} \to 2.34791 m^{4/3}$, $\nu_{13} \to 2.3746 m^{-2/3}$, $\sigma_{13} \to 1.28904 m^{1/3}$
for large $m$.

\nfootnote{To be honest, the simple splitting integrator in the original coordinates (but in 2D!) is actually faster by a factor of two
(for the same stepsize) even though its dimension is larger.}

\section*{Acknowledgement}

I would like to thank Anthony Henderson and Konrad Sch\"obel  for useful discussions.
This research was supported in part by ARC Discovery Grant DP110102001.
This paper was completed during a stay at the Fields Institute and the author would 
like to thank the Institute for its hospitality.


\def\cprime{$'$}

\end{document}